\newtheorem{theorem}{Theorem}[section]
\newtheorem{corollary}[theorem]{Corollary}
\newtheorem{lemma}[theorem]{Lemma}
\newtheorem{proposition}[theorem]{Proposition}
\theoremstyle{definition}
\newtheorem{definition}[theorem]{Definition}
\theoremstyle{remark}
\newtheorem{remark}[theorem]{\sc Remark}
\newtheorem{example}[theorem]{\sc Example}
\newcommand{\Sing}{{\rm{Sing\hspace{2pt}}}}
\newcommand{\Disc}{{\rm{Disc\hspace{2pt}}}}
\renewcommand{\d}{{\rm{d}}}
\newcommand{\m}{\setminus}
\newcommand{\bR}{{\mathbb R}}
\newcommand{\bC}{{\mathbb C}}
\newcommand{\R}{\mathbb{R}}
\newcommand{\C}{\mathbb{C}}
\title{On the topology of the Milnor boundary for real analytic singularities}
\author{R. Ara\'ujo dos Santos}
\author{A. Menegon}
\author{M. Ribeiro}
\author{J. Seade}
\author{I. D. Santamaria Guar\'in}
\begin{document}
\maketitle

\begin{abstract}
We study the topology of the boundaries of the Milnor fibers of real analytics map-germs $f: (\R^M,0) \to (\R^K,0)$ and $f_{I}:=\Pi_{I}\circ f : (\R^M,0) \to (\R^I,0)$ that admit Milnor's tube fibrations, where $\Pi_{I}:(\bR^K,0)\to (\bR^{I},0)$ is the canonical projection for $1\leq I<K.$ For each $I$ 
we prove that the Milnor boundary $\partial F_{I}$ is given by the double of the Milnor tube fiber $F_{I+1}.$ We prove that  if $K-I\geq 2$, then  the pair $(\partial F_{I},\partial F_{f})$ is a generalized $(K-I-1)$-open-book decomposition with binding $\partial F_{f}$ and page $F_{f}\m \partial F_{f}$ - the interior of the Milnor fibre $F_{f}$ (see the definition below). This allows us to prove several new Euler characteristic formulae connecting the Milnor boundaries $\partial F_{f},$ $\partial F_{I},$ with the respectives links $\mathcal{L}_{f}, \mathcal{L}_{I},$ for each $1\leq I<K,$ and a L\^e-Greuel type formula for the Milnor boundary. 

\end{abstract}

\section{Introduction}

One of the most active and challenging areas in singularity theory is the study of non-isolated singularities of complex spaces. For instance, if $f: (\C^n,0) \to (\C,0)$ is a holomorphic germ of function with non-isolated critical point, the degeneration process of the non-critical levels to the non-isolated singularity hypersurface defined by $f$ is still not well-understood, unlike the isolated singularity case. 

One approach to this problem is to study such degeneration over a small sphere around the origin. In other words, one tries to understand the topology of the boundary of the Milnor fiber and how it degenerates to the link of $f$. This problem has been attacked by several authors like Siersma \cite{Siersma1, Siersma2}, Nemethi-Szilard \cite{NS}, Michel-Pichon \cite {MP, MP3, MPW}, Bobadilla-Menegon \cite{FM}, Menegon-Seade \cite{MS} and Aguilar-Menegon-Seade \cite{AMS}.

The corresponding understanding for real analytic singularities is still very poor. Although one can define a Milnor fibration for many classes of real analytic germs of mapping $f: (\R^M,0) \to (\R^K,0)$, not much is known about the topology of the corresponding Milnor fiber or the link of $f$ (see \cite{Oka, Sa, MS, MMS, Sa1} for some results), and even less about the boundary of such objects. 

The first part of this paper aims to introduce a new perspective to deal with such problem, inspired mainly by \cite{DR, ADD, MS2}. The idea is to relate the topology of the boundary of the Milnor fiber of $f$, denoted by $\partial F_f$, with the boundary of the Milnor fiber of the composition $f_I$ of $f$ with some projection $(\R^K,0) \to (\R^I,0)$, which we denote by $\partial F_I$. As a result, in Section \ref{op} we prove that for $K-I\geq 2$ there is a generalized open-book decomposition
$$\dfrac{f_{K-I}}{\|f_{K-I}\|}: \partial F_{I}\m \partial F_{f}\to S^{K-I-1} \, ,$$
where $f_{K-I}$ is the composition of $f$ with the projection $(\R^K,0) \to (\R^{K-I},0)$. The particular case $0\leq K-I \leq 1$ and the case of a complex ICIS $(\bC^{M+K},0)\to (\bC^{K},0)$ are analyzed. 

On the other hand, the understanding of the topology of the boundary of the Milnor fiber of the function-germ $f_I$ also provides a tool to better understanding the topology of the Milnor fiber of the map-germ $f$ itself. In fact, in Section \ref{section_EC} we use the aforementioned open-book decomposition to obtain some formulae relating the Euler characteristics of $\partial F_I$, $F_f$ and the link $\mathcal L_I$ of $f_I$, for $I=1, \dots, K$. 

Finally, in the last section of the article we use those Euler characteristic formulae to get a hint on the possible topological behaviour of real analytic map-germs on an odd number of variables and how similar or different it can be when compared with the complex setting. 



\section{Notations and basics definitions} \label{section_1}

Let $f:(\bR^{M},0)\to (\bR^{K},0), f=(f_{1},\ldots ,f_{K})$ be an analytic map germ and consider the following diagram 
\begin{equation}\label{diag}
\begin{tikzcd}[row sep = large]
& (\bR^{M},0) \arrow[d, "f"] \arrow[ddr, bend left,"f_{K-I}"]   \arrow[ddl, bend right,"f_{I}",swap] & \\
& (\bR^{K},0) \arrow[dr, "\Pi_{K-I}"]  \arrow[dl,swap, "\Pi_{I}"] & \\
 (\bR^{I},0) & & (\bR^{K-I},0) 
\end{tikzcd}
\end{equation}
where the projections $\Pi_{I}(y_{1},\ldots, y_{K})=(y_{1},\ldots, y_{I})$ and $\Pi_{K-I}(y_{1},\ldots, y_{K}):=(y_{I+1},\ldots, y_{K}),$ $f_{I}=\Pi_{I} \circ f$ and $f_{K-I}=\Pi_{K-I}\circ f.$

\vspace{0.5cm}

{\bf Basic notations and definitions:} 

\vspace{0.2cm}

The {\bf zero locus} of $f$ is defined and denoted by $V(f):=\{f=0\},$ respectively, $V(f_{I})=\{f_{I}=0\}$ and $V(f_{K-I})=\{f_{K-I}=0\}.$ Hence, $$V(f_{I})\supseteq V(f) \subseteq V(f_{K-I}).$$

\vspace{0.2cm}

The {\bf singular set} of $f$, denoted by $\Sing f$, is defined to be the set of points $x\in (\bR^{M},0)$ such that the rank of the Jacobian matrix $df(x)$ is lower than $K$. Analogously, we define the singular sets $\Sing f_{I}$ and $\Sing f_{K-I}$ of $F_I$ and $F_{K-I}$, respectively. The {\bf discriminant set} of $f$ is then defined by 
$$\Disc f:= f(\Sing f) \, .$$

\vspace{0.2cm}

The {\bf polar set} of $f$ relative to $g(x):=\|x\|^2$ is defined and denoted by $\Sing (f,g)$. Analogously, we define $\Sing (f_{I},g)$ and $\Sing (f_{K-I},g)$.

\vspace{0.2cm}

The next diagram relates the singular and the polar sets:



\begin{equation}\label{diagram1}
    \begin{tikzcd}
	{\Sing(f_I)  } & {\Sing(f_I,g) } & {\Sing\left(\dfrac{f_I}{\|f_I\|},g\right)} \\
	{\Sing(f) } & {\Sing(f,g) } & { \Sing\left(\dfrac{f}{\|f\|},g\right)} \\
	{\Sing(f_{k-I}) } & {\Sing(f_{k-I},g) } & { \Sing\left(\dfrac{f_{k-I}}{\|f_{k-I}\|},g\right)} \\
	&&&&&& {}
	\arrow[hook, from=1-1, to=1-2]
	\arrow[hook', from=1-3, to=1-2]
	\arrow[hook, from=1-1, to=2-1]
	\arrow[hook, from=1-2, to=2-2]
	\arrow[hook, from=1-3, to=2-3]
	\arrow[hook', from=3-1, to=2-1]
	\arrow[hook', from=3-2, to=2-2]
	\arrow[hook', from=3-3, to=2-3]
	\arrow[hook, from=2-1, to=2-2]
	\arrow[hook', from=2-3, to=2-2]
	\arrow[hook', from=3-3, to=3-2]
	\arrow[hook, from=3-1, to=3-2]
\end{tikzcd}
\end{equation}

\begin{definition} \label{tame} We say that a map germ $f:(\bR^{M},0)\to (\bR^K,0), f=(f_{1},\ldots ,f_{K})$ is {\it tame}, or satisfies the transversality condition at the origin if 
$$\overline{\Sing (f,g)\m V(f)}\cap \Sing f \subseteq \{0\}$$
as a germ of set at the origin.

\end{definition}

\begin{lemma}
Let $1\leq I \leq K-1.$ If $f$ is tame, then $f_{I}$ and $f_{K-I}$ are tame as well. 
\end{lemma}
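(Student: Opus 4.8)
The plan is to unwind the definition of tameness for $f_I$ and $f_{K-I}$ and show it follows from that of $f$ by exploiting the natural inclusions of the singular and polar loci displayed in diagram \eqref{diagram1}. I will treat $f_I$ in detail; the argument for $f_{K-I}$ is identical after relabelling coordinates. Recall that $f_I = \Pi_I \circ f$, so at any point $x$ the Jacobian $df_I(x)$ is obtained from $df(x)$ by deleting the last $K-I$ rows. This immediately gives the containments $\Sing f \subseteq \Sing f_I$ and, since a critical point of $f_I$ relative to $g$ is a point where $\grad g$ lies in the row span of $df_I(x)$, which is a subspace of the row span of $df(x)$, also $\Sing(f_I,g)\supseteq \Sing(f,g)$, consistent with the top-left square of \eqref{diagram1}. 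Finally $V(f_I)\supseteq V(f)$.

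The key step is to establish the inclusion
\[
\overline{\Sing(f_I,g)\m V(f_I)}\cap \Sing f_I \;\subseteq\; \overline{\Sing(f,g)\m V(f)}\cap \Sing f
\]
as germs at $0$ — once this is shown, tameness of $f$ forces the right-hand side into $\{0\}$, hence the left-hand side too, which is exactly tameness of $f_I$. To prove this inclusion I would proceed as follows. First, I claim that away from $V(f)$ one has $\Sing(f_I,g)\m V(f) \subseteq \Sing(f,g)$; indeed, if $x\notin V(f)$ but $f_I(x)=0$, such points need separate care, so the cleaner route is to argue on the level of germs: let $x$ be a point in $\overline{\Sing(f_I,g)\m V(f_I)}\cap \Sing f_I$ near $0$, and take a sequence $x_n \to x$ with $x_n \in \Sing(f_I,g)$ and $f_I(x_n)\neq 0$. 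I would then split into two cases according to whether $x \in \Sing f$ or not. If $x\in\Sing f$, I want to show $x$ lies in $\overline{\Sing(f,g)\m V(f)}$; here one uses that near such a point the rank condition defining $\Sing(f_I,g)$ propagates to $\Sing(f,g)$ because $\Sing f_I$ contains $\Sing f$ and, along the degeneration, the polar relation for $f_I$ combined with $x_n \notin \Sing f$ (which we may assume by shrinking, since $\Sing f$ is the smaller set) yields $x_n \in \Sing(f,g)$ with $f(x_n)\neq 0$. If $x \notin \Sing f$, then since $x\in\Sing f_I$ but $df(x)$ has rank $K$, we are in a situation where the last $K-I$ components of $f$ are "the non-degenerate part"; one then argues that $x$ cannot be an accumulation point unless $x=0$, by a dimension or transversality argument using that $f$ is tame.

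The main obstacle I anticipate is precisely the bookkeeping in the second case and, more generally, controlling the interaction between the locus $V(f_I)\m V(f)$ (where $f_I$ vanishes but $f$ does not) and the polar sets: the naive inclusion $\Sing(f_I,g)\m V(f_I) \subseteq \Sing(f,g)\m V(f)$ need not hold on the nose because a point may lie in $V(f_I)$ yet outside $V(f)$, so one must pass to closures carefully and possibly use a curve-selection lemma to produce, from an analytic arc in $\Sing(f_I,g)\m V(f_I)$ limiting to $x$, a corresponding arc in $\Sing(f,g)\m V(f)$. The tameness hypothesis on $f$ enters exactly to rule out the bad accumulation: it says $\overline{\Sing(f,g)\m V(f)}$ meets $\Sing f$ only at $0$, and this must be leveraged to conclude the same for $f_I$. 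I would also record explicitly that, being a germ statement, it suffices to work in an arbitrarily small ball, which lets us assume all the relevant analytic sets are cones or at least have the origin as their only possible "extra" intersection point. Once the inclusion of closures is in hand the proof concludes in one line, and the argument for $f_{K-I}$ follows by symmetry since $\Pi_{K-I}$ is again a coordinate projection.
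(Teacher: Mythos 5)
Your proposal goes wrong at the very first step: the two basic containments you write down are both reversed, and they contradict the paper's own diagram \eqref{diagram1}. Since $df_I(x)$ is obtained from $df(x)$ by deleting the last $K-I$ rows, $\rank df_I(x)<I$ forces $\rank df(x)\leq \rank df_I(x)+(K-I)<K$; hence $\Sing f_I\subseteq \Sing f$, not $\Sing f\subseteq \Sing f_I$ as you claim (for a counterexample to your containment take $f=(x_1,x_1)$ with $K=2$, $I=1$: every point is singular for $f$, no point is singular for $f_1$). The same row-deletion argument applied to the $(I+1)\times M$ matrix with rows $df_I(x)$ and $dg(x)$ gives $\Sing(f_I,g)\subseteq\Sing(f,g)$, again the opposite of what you assert; note that your own heuristic ("$\grad g$ lies in the row span of $df_I$, which is a subspace of the row span of $df$") actually proves this correct direction, not the one you wrote down.

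With the containments oriented correctly, the obstacle you anticipate disappears and none of the machinery you sketch (curve selection, the case split on whether $x\in\Sing f$, arcs in $V(f_I)\m V(f)$) is needed. From $\Sing(f_I,g)\subseteq\Sing(f,g)$ and $V(f_I)\supseteq V(f)$ one gets $\Sing(f_I,g)\m V(f_I)\subseteq \Sing(f,g)\m V(f)$ on the nose (one removes a larger set from a smaller set), hence
\[
\overline{\Sing(f_I,g)\m V(f_I)}\cap \Sing f_I \;\subseteq\; \overline{\Sing(f,g)\m V(f)}\cap \Sing f \;\subseteq\;\{0\},
\]
which is exactly tameness of $f_I$; the case of $f_{K-I}$ is identical after relabelling coordinates, as you say. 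As written, your argument is also incomplete on its own terms: the second case ($x\notin\Sing f$) ends with an appeal to an unspecified "dimension or transversality argument," and the deduction $x_n\in\Sing(f,g)$ in your first case is precisely the containment $\Sing(f_I,g)\subseteq\Sing(f,g)$ that your opening paragraph denies.
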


It is well known that the tameness conditions for $f,$ $f_{I}$ and $f_{K-I}$ induce the following fibrations on the boundary of the closed ball $S_{\epsilon}^{M-1}:=\partial B_{\epsilon}^{M}$:

\begin{equation} \label{btub1}
f_{|}: S_{\epsilon}^{M-1}\cap f^{-1}(B_{\eta_{1}}^{K}\m \{0\})\to B_{\eta_{1}}^{K}\m \{0\} 
\end{equation}

\begin{equation} \label{btub2}
{f_{I}}_{|}: S_{\epsilon}^{M-1}\cap f^{-1}_{I}(B_{\eta_{2}}^{I}\m \{0\})\to B_{\eta_{2}}^{I}\m \{0\} 
\end{equation}

\begin{equation} \label{btub3}
{f_{K-I}}_{|}: S_{\epsilon}^{M-1}\cap f^{-1}_{K-I}(B_{\eta_{3}}^{K-I}\m \{0\})\to B_{\eta_{3}}^{K-I}\m \{0\} 
\end{equation}

Moreover, under the extra conditions $\Disc f =\{0\}$ there also exists the {\bf Milnor tube's fibration} is the following sense: there exists $ \epsilon_{0} >0$ small enough such that for all $0< \epsilon \leq \epsilon_{0}$ there exists $0<\eta_{1} \ll \epsilon$ such that the restriction map 

\begin{equation} \label{mtub1}
f_{|}: B_{\epsilon}^{M}\cap f^{-1}(B_{\eta_{1}}^{K}\m \{0\})\to B_{\eta_{1}}^{K}\m \{0\} 
\end{equation}
is a locally trivial smooth fibration, where 
$B_{\epsilon}^{M},$ respectively $B_{\epsilon}^{K},$ stand for the closed ball in $\bR^{M}$ with radius $\epsilon,$ centered at origin, respectively in $\bR^{K}$ with radius $\eta.$ 

\vspace{0.2cm}

Hence, for the same reason, we conclude the existence of the Milnor tube fibrations for $f_{I}$ and $f_{K-I}:$

\begin{equation} \label{mtub2}
{f_{I}}_{|}: B_{\epsilon}^{M}\cap f^{-1}_{I}(B_{\eta_{2}}^{I}\m \{0\})\to B_{\eta_{2}}^{I}\m \{0\} 
\end{equation}

\begin{equation} \label{mtub3}
{f_{K-I}}_{|}: B_{\epsilon}^{M}\cap f^{-1}_{K-I}(B_{\eta_{3}}^{K-I}\m \{0\})\to B_{\eta_{3}}^{K-I}\m \{0\} 
\end{equation}

From now on denote by $F_{f}$, $F_{I}$ and $F_{K-I}$ the Milnor fibers of the fibrations \eqref{mtub1}, \eqref{mtub2} and \eqref{mtub3}, respectively, by $\partial F_{f}$, $\partial F_{I}$ and $\partial F_{K-I}$ the fibers of \eqref{btub1}, \eqref{btub2} and \eqref{btub3}.

\vspace{0.2cm}

Consider the Milnor tube fibration  ${f_{I}}_{|}: B_{\epsilon}^{M}\cap f^{-1}_{I}(B_{\eta_{2}}^{I}\m \{0\})\to B_{\eta_{2}}^{I}\m \{0\}$ and $z \in B_{\eta_{2}}^{I}\m \{0\}.$ Thus the fiber $F_{I}=f^{-1}(\Pi_{I}^{-1}(z)).$ 

\vspace{0.2cm}

Denote by $D^{K-I}=\Pi^{-1}_{I}(z)\cap (B_{\eta_{1}}^{K}\m \{0\})$ the $(K-I)-$dimensional closed disc given by the intersection of the fiber of projection $\Pi_{I}: \bR^{K}\to \bR^{I}$ with the closed ball $B_{\eta_{1}}^{K}\m \{0\}$ on the target space of fibration \eqref{mtub1}. 

\vspace{0.2cm}

Thus $F_{I}=f^{-1}(D^{K-I})$ and one may consider the restriction map $f: F_{I} \to D^{K-I}$ which is a smooth surjective proper submersion, and hence a smooth trivial fibration. 

\vspace{0.2cm}

Therefore, the following homeomorphism follows $F_{I} \approx F_{f}\times D^{K-I},$ which is a diffeomorphism after smoothing the corners. On the boundary of the Milnor fiber the following diffeomorphism holds true: 
\begin{equation} \label{b1}
\partial F_{I} \approx (\partial F_{f}\times D^{K-I}) \cup (F_{f} \times S^{K-I-1}).
\end{equation}

We remark that the next Proposition is in the same vein as \cite[Corollary 4]{CSS1}.

\begin{proposition} \label{pbound} Let $f: (\bR^{M},0)\to (\bR^{K},0),$ $M>K\geq 2,$ be a tame map germ with $\Disc f =\{0\}$ and for $1\leq I<K$ consider the composition map $f_{I}=\Pi_{I}\circ f$ where $\Pi_{I}: \bR^{K} \to \bR^{I}$ is the projection map. Then the boundary $\partial F_{I}$ of the Milnor fiber $F_{I}$ is obtained (up to homeomorphism) by the gluing together two disjoint copies of the Milnor fiber $F_{I+1}$ along the common boundary $\partial F_{I+1}.$
\end{proposition}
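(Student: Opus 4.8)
The plan is to deduce the statement from two ingredients: the structural homeomorphisms recorded just before the statement, and an elementary topological lemma about doubles. Recall that for every index $J$ with $1\le J\le K$ (reading $f_{K}=f$, $F_{K}=F_{f}$, $D^{0}$ a point) one has $F_{J}\approx F_{f}\times D^{K-J}$, and \eqref{b1} identifies $\partial F_{J}$ with $(\partial F_{f}\times D^{K-J})\cup(F_{f}\times S^{K-J-1})$, which is nothing but the corner-smoothed boundary $\partial(F_{f}\times D^{K-J})$ of the product; here $F_{f}$, the Milnor tube fibre of $f$, is a fixed compact manifold with boundary.

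The heart of the matter is the following claim: if $W$ is a compact manifold with boundary and $n\ge1$, then $\partial(W\times D^{n})$ is homeomorphic to the double $W\times D^{n-1}\cup_{\partial(W\times D^{n-1})}W\times D^{n-1}$. I would prove this by writing $D^{n}\approx D^{n-1}\times[-1,1]$; the (corner-smoothed) boundary of $W\times D^{n-1}\times[-1,1]$ is
$$\bigl(\partial(W\times D^{n-1})\times[-1,1]\bigr)\ \cup\ \bigl(W\times D^{n-1}\times\{-1\}\bigr)\ \cup\ \bigl(W\times D^{n-1}\times\{1\}\bigr),$$
with each end copy $W\times D^{n-1}\times\{\pm1\}$ glued to the middle cylinder along the whole of its boundary $\partial(W\times D^{n-1})\times\{\pm1\}$. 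Cutting the cylinder into the halves $\partial(W\times D^{n-1})\times[-1,0]$ and $\partial(W\times D^{n-1})\times[0,1]$ and absorbing each half into the adjacent end copy — attaching a boundary collar does not change the homeomorphism type — exhibits $\partial(W\times D^{n})$ as precisely two copies of $W\times D^{n-1}$ glued along $\partial(W\times D^{n-1})$, which is the asserted double. (For $n=1$ this reads $\partial(W\times[-1,1])\approx W\cup_{\partial W}W$, checked in the same way.)

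Applying this claim with $W=F_{f}$ and $n=K-I\ge1$ to the right-hand side of \eqref{b1}, and then using $F_{I+1}\approx F_{f}\times D^{K-I-1}$ together with \eqref{b1} for $f_{I+1}$, we obtain
$$\partial F_{I}\ \approx\ \partial(F_{f}\times D^{K-I})\ \approx\ (F_{f}\times D^{K-I-1})\cup_{\partial(F_{f}\times D^{K-I-1})}(F_{f}\times D^{K-I-1})\ \approx\ F_{I+1}\cup_{\partial F_{I+1}}F_{I+1},$$
i.e. two disjoint copies of $F_{I+1}$ glued along $\partial F_{I+1}$, as claimed. One should note in passing that $f_{I+1}$ is again tame with $\Disc f_{I+1}=\{0\}$ — indeed $\Sing f_{I+1}\subseteq\Sing f$, whence $\Disc f_{I+1}\subseteq\Pi_{I+1}(\Disc f)=\{0\}$ — so that $F_{I+1}$ and \eqref{b1} for $f_{I+1}$ are available, and that when $I+1=K$ one has $f_{I+1}=f$ and $F_{I+1}=F_{f}$, the formula still reading correctly. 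Geometrically the argument is just slicing the product $F_{f}\times D^{K-I}$ by a hyperplane through the centre of the disc factor, each half being a copy of $F_{f}\times D^{K-I-1}\approx F_{I+1}$.

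The conceptual content is thus small once \eqref{b1} is granted; the work lies in the bookkeeping. The point requiring genuine care is the corner-smoothing in the lemma — it is easy to produce $\partial F_{f}\times S^{K-I-1}$ instead of the double if the gluings are tracked carelessly — and, in the application, choosing the discs appearing in \eqref{b1} for the two indices compatibly (for instance taking the disc used for $f_{I+1}$ to be an equatorial sub-disc of the one used for $f_{I}$), so that the submanifold along which the two copies of $F_{I+1}$ are glued is literally $\partial F_{I+1}$.
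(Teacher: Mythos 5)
Your proof is correct and follows essentially the same route as the paper's: both reduce to the decomposition $\partial F_{I}\approx(\partial F_{I+1}\times[-1,1])\cup(F_{I+1}\times\{-1,1\})$ and then absorb the two halves of the cylinder into the end caps to exhibit the double. The only cosmetic difference is that the paper obtains $F_{I}\approx F_{I+1}\times[-1,1]$ directly from the factorization $f_{I}=\widehat{\Pi}_{I}\circ f_{I+1}$ (noting, as you do, that tameness and $\Disc f=\{0\}$ pass to $f_{I+1}$ and $f_{I}$), whereas you split the disc factor of $F_{I}\approx F_{f}\times D^{K-I}$ as $D^{K-I-1}\times[-1,1]$ and invoke \eqref{b1} at level $I+1$; the two are equivalent.
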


\proof The proof follows from the composition 

\[
  \begin{tikzcd}
    (\bR^{M},0) \arrow{r}{f_{I+1}} \arrow[swap]{dr}{f_{I}=\widehat{\Pi}_{I}\circ f_{I+1}} & (\bR^{I+1},0) \arrow{d}{\widehat{\Pi}_{I}} \\
     & (\bR^{I},0)
  \end{tikzcd}
\] where $\widehat{\Pi}_{I}(y_{1},\ldots,y_{I+1})=(y_{1},\ldots,y_{I})$ and the fact that $f$ being tame implies the same to $f_{I+1}$ and $f_{I}.$ In the same manner $\Disc f=\{0\}$ implies $\Disc f_{I+1}=\{0\}$, $\Disc f_{I}=\{0\}.$ 

\vspace{0.2cm}

Hence, $F_{I} \approx F_{I+1}\times [-1,1]$ and the boundary 
 
\begin{equation} \label{b2}
\partial F_{I}\approx (\partial F_{I+1} \times [-1,1]) \cup (F_{I+1} \times \{-1,1\})
\end{equation}

\vspace{0.2cm}

\begin{figure}[h]
	\centering
	\includegraphics[width=0.5\linewidth]{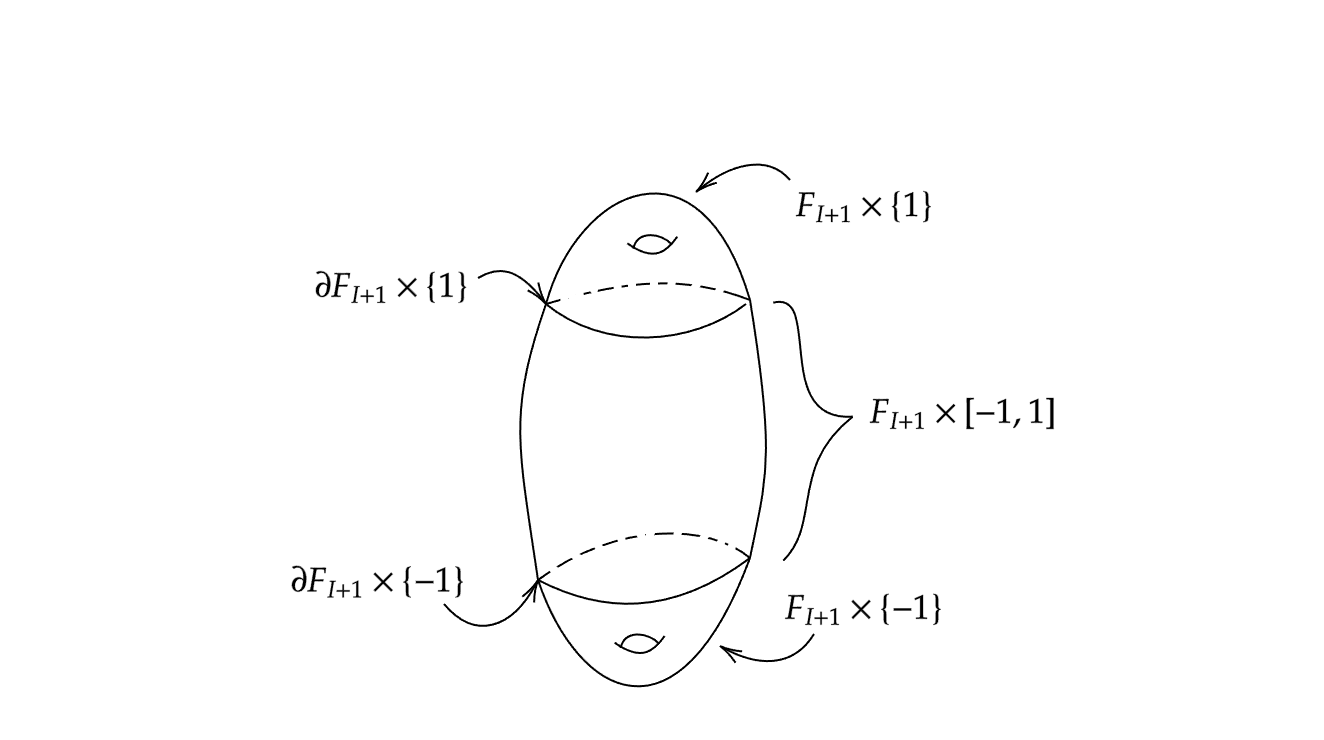}
	\caption{}
	\label{fig:f1}
\end{figure}

\vspace{0.2cm}

Now it is easy to see that the closed manifold $\partial F_{I}$ is obtained by the gluing the two disjoint copies of $F_{I+1}$ given by $F_{I+1}\times \{-1\}\cup F_{I+1}\times \{1\}$ along  the boundaries of the cylinder $\partial F_{I+1}\times [-1,1].$ See the Figure \eqref{fig:f1}. Therefore the result follows. \endproof

\section{Fibration structure on the boundary of the Milnor fiber} \label{op}

From now on we will consider $f$ tame, $\Disc f =\{0\},$ and $V(f)\neq \{0\}.$ One may adjust the radii $\eta_{1},$ $\eta_{2}$ and $\eta_{3}$ in the fibrations \eqref{btub1}, \eqref{btub2} and \eqref{btub3} such that $\partial F_{I} \subset S_{\epsilon}^{M-1}\cap f^{-1}(B_{\eta_{1}}^{K}\m \{0\})$ and the restriction map $f_{K-I}:\partial F_{I} \to \bR^{K-I}$ is well defined, for any $1\leq I < K.$

\begin{lemma}\label{sub} The restriction map $f_{K-I}:\partial F_{I} \to \bR^{K-I}$ is a smooth submersion.
\end{lemma}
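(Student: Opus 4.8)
The plan is to show that at every point $x \in \partial F_I$ the differential $d(f_{K-I})_x \colon T_x \partial F_I \to \bR^{K-I}$ is surjective, which is exactly the statement that $f_{K-I}$ restricted to $\partial F_I$ is a submersion. First I would recall the precise description of $\partial F_I$: by definition it is the fiber of the boundary fibration \eqref{btub2}, so $\partial F_I = S_\epsilon^{M-1} \cap f_I^{-1}(z)$ for a fixed regular value $z \in B_{\eta_2}^I \m \{0\}$; equivalently, using the diagram in the proof of Proposition \ref{pbound} together with the disc picture, $\partial F_I = S_\epsilon^{M-1} \cap f^{-1}(D^{K-I})$ where $D^{K-I} = \Pi_I^{-1}(z) \cap \overline{B_{\eta_1}^K}$. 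So $\partial F_I$ sits inside the sphere $S_\epsilon^{M-1}$ and is cut out there by the $I$ equations $f_I = z$.

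The key step is a transversality argument. Since $f$ is tame with $\Disc f = \{0\}$, the map $f$ restricted to $S_\epsilon^{M-1} \cap f^{-1}(B_{\eta_1}^K \m \{0\})$ is the submersion \eqref{btub1}; in particular at each $x \in \partial F_I$ the full differential $d f_x$ restricted to $T_x S_\epsilon^{M-1}$ is surjective onto $\bR^K$. Write $df_x|_{T_x S_\epsilon^{M-1}} = (d(f_I)_x|, d(f_{K-I})_x|)$ with respect to the splitting $\bR^K = \bR^I \times \bR^{K-I}$ coming from $\Pi_I$ and $\Pi_{K-I}$. Surjectivity of this pair means precisely that the restriction of $d(f_{K-I})_x$ to the subspace $\ker\big(d(f_I)_x|_{T_x S_\epsilon^{M-1}}\big) = T_x \partial F_I$ is onto $\bR^{K-I}$: this is the standard linear-algebra fact that if $(\phi,\psi) \colon W \to \bR^I \times \bR^{K-I}$ is surjective then $\psi|_{\ker \phi}$ is surjective. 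Hence $d(f_{K-I})_x|_{T_x \partial F_I}$ is surjective, i.e.\ $f_{K-I} \colon \partial F_I \to \bR^{K-I}$ is a submersion at $x$, and since $x$ was arbitrary we are done.

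The main obstacle, and the point that needs care rather than cleverness, is to verify that the radii $\eta_1, \eta_2, \eta_3$ (and $\epsilon$) can genuinely be chosen simultaneously so that $\partial F_I$ lies inside the domain $S_\epsilon^{M-1} \cap f^{-1}(B_{\eta_1}^K \m \{0\})$ of the fibration \eqref{btub1} — this is the compatibility of radii already asserted in the paragraph preceding the Lemma, and it relies on the tameness of $f$ (via the Lemma stating $f_I$ and $f_{K-I}$ are tame) to guarantee that the several Milnor-type fibrations coexist for a common small $\epsilon$. Once this nesting $\partial F_I \subset S_\epsilon^{M-1} \cap f^{-1}(B_{\eta_1}^K \m \{0\})$ is in place, so that the ambient submersion \eqref{btub1} is available at every point of $\partial F_I$, the submersion property of $f_{K-I}|_{\partial F_I}$ is the purely formal consequence described above. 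I would also remark that one should check $f_{K-I}(\partial F_I)$ avoids $0$ so that the map lands in $\bR^{K-I} \m \{0\}$ when that is needed later (for the open-book quotient $f_{K-I}/\|f_{K-I}\|$), though for the bare submersion statement onto $\bR^{K-I}$ this is not required.
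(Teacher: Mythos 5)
Your proof is correct and follows essentially the same route as the paper: the paper stacks $\d f_I$, $\d f_{K-I}$ and $\d g$ into a matrix $A(x)$ whose maximal rank (from tameness) immediately gives the submersion, which is exactly your linear-algebra step of restricting the surjective pair $(d f_I, d f_{K-I})$ on $T_x S_\epsilon^{M-1}$ to $\ker d(f_I)_x = T_x\partial F_I$. The only cosmetic difference is that you quote the sphere fibration \eqref{btub1} as the source of surjectivity rather than invoking tameness directly, and both you and the paper rely on the same (asserted) compatibility of the radii $\epsilon, \eta_1, \eta_2, \eta_3$.
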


\proof  For $y\in B_{\eta_{2}}^{I}\m \{0\}$ consider $\partial F_{I}= S_{\epsilon}^{M-1}\cap f_{I}^{-1}(y)$ and the matrix 
$$A(x):=\left[ \begin{array}{c}
 	 	\d f_{I}(x) \\ 
         \d f_{K-I}(x) \\
 	 	\d g(x)
\end{array}\right].$$

\vspace{0.2cm}

By the tameness of $f$ we have that for all $x\in \partial F_{I}$ the rank of $A(x)$ is maximal. Hence $f_{K-I}$ is a smooth submersion. \endproof

Now since $0\in \bR^{K-I}$ is a regular value of $f_{K-I}:\partial F_{I} \to \bR^{K-I}$ by the compactness of $\partial F_{I}$ one may choose $\tau>0$ small enough and a closed disc $D^{K-I}_{\tau} \subset \bR^{K-I}$ centered at the origin $0\in \bR^{K-I},$ such that all $y\in D^{K-I}_{\tau}$ is a regular value of the restriction map $f_{K-I}.$ 
\vspace{0.2cm}

Hence the restriction map $f_{K-I}:\partial F_{I} \cap f^{-1}_{K-I}(D^{K-I}_{\tau}) \to D^{K-I}_{\tau}$ is a smooth onto submersion, then a trivial fibration with the fiber diffeomorphic to $\partial F_{f}=\partial F_{I}\cap f_{K-I}^{-1}(0).$ Therefore, 

\begin{equation}\label{d1}
     \partial F_{I} \cap f^{-1}_{K-I}(D^{K-I}_{\tau}) \approx \partial F_{f} \times D_{\tau}^{K-I}.
\end{equation}

Denote by $T_{\tau}(\partial F_{f}):= \partial F_{f} \times D_{\tau}^{K-I}$ the closed tubular neighbourhood of the embedded submanifold $\partial F_{f} \longhookrightarrow \partial F_{I}.$ See the Figure \eqref{fig:f2}.

\vspace{0.2cm}

\begin{figure}[h]
	\centering
	\includegraphics[width=0.8\linewidth]{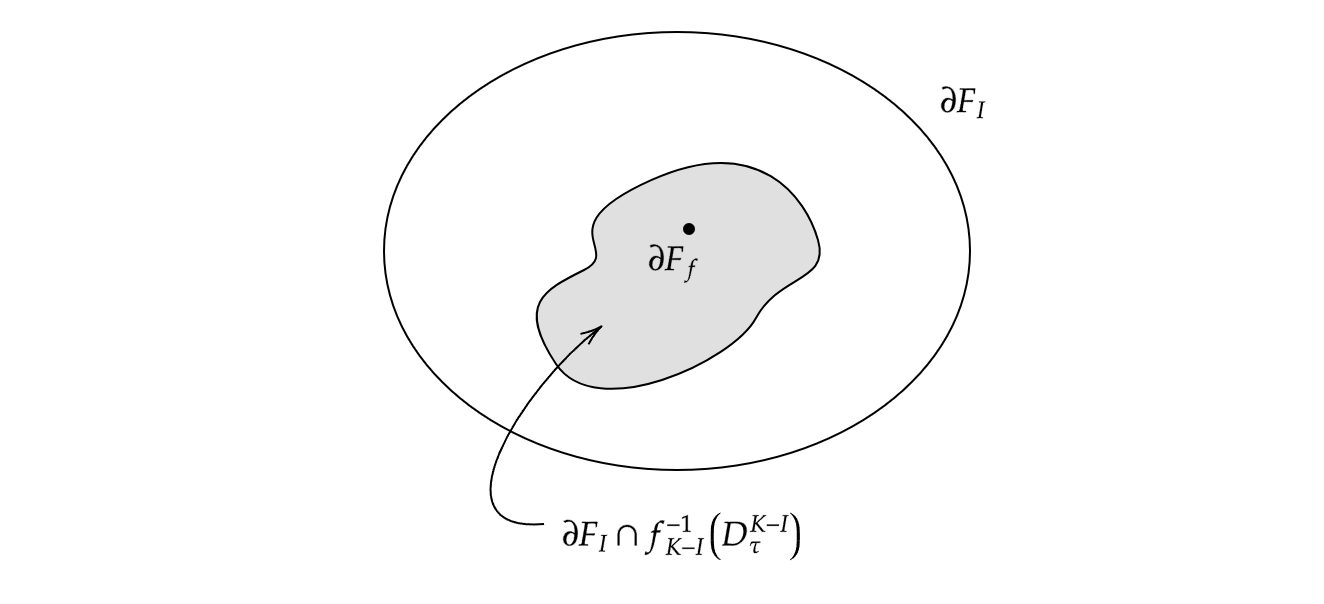}
	\caption{}
	\label{fig:f2}
\end{figure}

\vspace{0.2cm}

Then, by \eqref{b1} it follows that the complement 

\begin{equation}\label{d2}
\partial F_{I} \m  int (T_{\tau}(\partial F_{f})) \approx F_{f} \times S^{K-I-1}
\end{equation}

In this section we will prove that 
the embedded submanifold $\partial F_{f} \longhookrightarrow \partial F_{I}$ yields on the boundary  $\partial F_{I}$ an interesting structure. For that, let $f:(\bR^{M},0)\to (\bR^{K},0), M>K\geq 2$ and take $1 \leq I \leq K-2$ as in the beginning of section \ref{section_1}.    

\vspace{0.2cm}

We are now ready to introduce the main result of this section. Before that, we introduce an appropriate definition which fits with the type of fibration structure we are able to prove on the boundary of the Milnor fibration. 

\vspace{0.2cm}

Following H. Winkelnkemper in \cite{W}, A. Ranicki \cite{R}\footnote{including the appendix "The history and applications of open books", by H. E. Winkelnkemper}, E. Looijenga \cite{Lo2}, see also \cite{DACA} and \cite[section 3]{AHSS}, given $M$ a smooth manifold $M$ and $N\subset M$ a submanifold of codimension $k\geq 2$ in $M,$ suppose that for some trivialization $t:T(N)\to N\times B^{k}$ of a tubular neighbourhood $T(N)$ of $N$ in $M,$ the fiber bundle defined by the composition
$\pi \circ t$ in the diagram below

\[
  \begin{tikzcd}
    T(N)\m N \arrow{r}{t} \arrow[swap]{dr}{\pi \circ t} & N\times (B^{k}\m \{0\}) \arrow{d}{\pi} \\
     & S^{k-1}
  \end{tikzcd}
\]
where $\pi (x,y):=\dfrac{y}{\|y\|},$ extends to a smooth locally trivial fiber bundle $p: M\m N \to S^{k-1};$ e.i., $p_{|_{T(N)\m N}}= \pi \circ t.$ 

\vspace{0.2cm}

In such a case the pair $(M,N)$ above will be called a {\it generalized $(k-1)$-open-book decomposition on $M$ with {\it binding $N$}} and {\it page} the fiber $p^{-1}(y), y\in S^{k-1}.$ 

\vspace{0.2cm}

The main result of this section is:

\begin{theorem}\label{bopen} Let $f: (\bR^{M},0)\to (\bR^{K},0),$ $M>K\geq 2,$ be a tame map germ with $\Disc f =\{0\}.$ Then, for each $1\leq I<K$ such that $K-I\geq 2,$ the pair $(\partial F_{I},\partial F_{f})$ is a generalized $(K-I-1)$-open-book decomposition, with binding $\partial F_{f}$ and page $F_{f}\m \partial F_{f}$- the interior of the Milnor fiber $F_{f}.$
\end{theorem}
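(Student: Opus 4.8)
The plan is to exhibit explicitly the global fibration $p:\partial F_I\setminus\partial F_f\to S^{K-I-1}$ promised by the definition and to check that it restricts on a tubular neighbourhood of the binding to the map $\pi\circ t$ coming from a trivialization. The natural candidate is
\[
p:=\frac{f_{K-I}}{\|f_{K-I}\|}:\partial F_I\setminus\partial F_f\longrightarrow S^{K-I-1},
\]
which makes sense because $\partial F_f=\partial F_I\cap f_{K-I}^{-1}(0)$, so $f_{K-I}$ is nonvanishing on the complement. First I would verify that $p$ is a smooth submersion, hence a locally trivial fibration by Ehresmann once properness of a suitable restriction is arranged. For this I would use Lemma \ref{sub}: the restriction $f_{K-I}:\partial F_I\to\bR^{K-I}$ is already a submersion onto a neighbourhood of the origin, and the standard polar-curve/tameness argument (the same matrix $A(x)$ appearing in the proof of Lemma \ref{sub}, now with the extra row $\d(\|f_{K-I}\|)$ automatically dependent) shows that no point of $\partial F_I\setminus\partial F_f$ is critical for $f_{K-I}/\|f_{K-I}\|$ either. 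One must also restrict to the part mapping into a sufficiently small punctured disc so that $\partial F_I\cap f_{K-I}^{-1}(D^{K-I}_\tau\setminus\{0\})$ has compact closure and the fibration is genuinely locally trivial over $S^{K-I-1}$; this is where the choice of $\tau$ from the paragraph preceding \eqref{d1} enters.

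Next I would identify the page. Over a point $y\in S^{K-I-1}$ the fibre $p^{-1}(y)$ is, up to the radial rescaling, $\partial F_I\cap f_{K-I}^{-1}(\text{ray through }y)$; intersecting with the tube fibration \eqref{btub3} of $f_{K-I}$ and using that $\partial F_f=\partial F_I\cap f_{K-I}^{-1}(0)$ is the binding, one sees the page is diffeomorphic to the interior $F_f\setminus\partial F_f$ of the Milnor fibre of $f$. Concretely this is consistent with the decomposition \eqref{d2}, which already says $\partial F_I\setminus\operatorname{int}(T_\tau(\partial F_f))\approx F_f\times S^{K-I-1}$: this product description is precisely the trivialized mapping-torus picture of the fibration $p$ away from the binding, and collapsing the tube recovers $F_f$ with its boundary as the page closure. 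I would state the page identification as $p^{-1}(y)\approx F_f\setminus\partial F_f$ and note that its closure $\overline{p^{-1}(y)}\approx F_f$ has boundary the binding $\partial F_f$, matching the open-book axioms.

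It then remains to check the local compatibility near the binding: that for the trivialization $t:T_\tau(\partial F_f)\xrightarrow{\ \approx\ }\partial F_f\times D^{K-I}_\tau$ furnished by \eqref{d1}, one has $p|_{T_\tau(\partial F_f)\setminus\partial F_f}=\pi\circ t$ with $\pi(x,u)=u/\|u\|$. Here I would choose the trivialization in \eqref{d1} to be the one induced by the fibration $f_{K-I}:\partial F_I\cap f_{K-I}^{-1}(D^{K-I}_\tau)\to D^{K-I}_\tau$ together with the obvious identification of $D^{K-I}_\tau$ with the standard disc; by construction the second coordinate of $t$ is then $f_{K-I}$ itself (rescaled), so $\pi\circ t=f_{K-I}/\|f_{K-I}\|=p$ on the punctured tube, which is exactly the required equality. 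Finally, since $K-I\geq 2$ the codimension condition $k=K-I\geq 2$ in the definition is met, so the pair $(\partial F_I,\partial F_f)$ is a generalized $(K-I-1)$-open-book decomposition with binding $\partial F_f$ and page $F_f\setminus\partial F_f$.

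The main obstacle I expect is the submersivity of $f_{K-I}/\|f_{K-I}\|$ on all of $\partial F_I\setminus\partial F_f$ (equivalently, the absence of polar critical points for the normalized map on the link of the fibre) and the attendant control of radii: one needs $\epsilon,\eta_i,\tau$ chosen in the right order so that Ehresmann applies and the resulting bundle over $S^{K-I-1}$ is the \emph{same} as the local model near the binding. The tameness hypothesis and $\Disc f=\{0\}$, together with the already-established Lemma \ref{sub} and the product structures \eqref{b1}, \eqref{d1}, \eqref{d2}, are exactly what make this bookkeeping go through; once the rank condition for the normalized map is secured, the rest is assembling the pieces.
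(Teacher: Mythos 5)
Your proposal follows essentially the same route as the paper's proof: the candidate fibration is $f_{K-I}/\|f_{K-I}\|$ on $\partial F_I\setminus\partial F_f$, local triviality is obtained from Lemma \ref{sub} and the tameness-controlled choice of radii, the tube decomposition \eqref{d1}--\eqref{d2} identifies the local model near the binding and the page $F_f\setminus\partial F_f$, and the pieces are glued along $f_{K-I}^{-1}(S^{K-I-1}_\tau)$. The only cosmetic difference is that you phrase the argument as a single global submersivity check plus a compatibility check near the binding, whereas the paper assembles the bundle from three explicit diagrams over the punctured disc, its boundary sphere, and the exterior; the content is the same.
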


\proof Consider the restriction map $f_{K-I}:\partial F_{I} \to \bR^{K-I}.$ It follows by Lemma \ref{sub} that we may adjust the radii of the fibrations \eqref{btub1}, \eqref{btub2} and \eqref{btub3} such that for a small enough radius $\tau$ in the diagram below 

\begin{equation}\label{p1}
 \begin{tikzcd}
    \partial F_{I} \cap f^{-1}_{K-I}(D^{K-I}_{\tau}-\{0\}) \arrow{r}{f_{K-I}} \arrow[swap]{dr}{\dfrac{f_{K-I}}{\|f_{K-I}\|}} & D^{K-I}_{\tau}-\{0\} \arrow{d}{\Pi_{R}(z)=\frac{z}{\|z\|}} \\
     & S^{K-I-1}
  \end{tikzcd}
\end{equation}
the projection $\dfrac{f_{K-I}}{\|f_{K-I}\|}$ is a (trivial) locally fiber bundle, where $\Pi_{R}$ is the radial projection. 

\vspace{0.2cm}

It induces the trivial fibration on the diagonal projection

\begin{equation}\label{p2}
 \begin{tikzcd}
    \partial F_{I} \cap f^{-1}_{K-I}(S^{K-I-1}_{\tau}) \arrow{r}{f_{K-I}} \arrow[swap]{dr}{\dfrac{f_{K-I}}{\|f_{K-I}\|}} & S^{K-I-1}_{\tau} \arrow{d}{\Pi_{R}(z)=\frac{z}{\|z\|}} \\
     & S^{K-I-1}
  \end{tikzcd}
\end{equation}

Applying again the Lemma \ref{sub} in the diagram below we get that the horizontal map is a locally trivial fibration over its image; and thus, the diagonal projection is again a locally trivial smooth fibration.  

\begin{equation}\label{p3}
 \begin{tikzcd}
    \partial F_{I} \m f^{-1}_{K-I}(int(D^{K-I}_{\tau})) \arrow{r}{f_{K-I}} \arrow[swap]{dr}{\dfrac{f_{K-I}}{\|f_{K-I}\|}} & \bR^{K-I} \m \{0\}\arrow{d}{\Pi_{R}(z)=\frac{z}{\|z\|}} \\
     & S^{K-I-1}
  \end{tikzcd}
\end{equation}

Now we may glue the fibrations \eqref{p1} and \eqref{p3} along the fibration \eqref{p2} to get a smooth projection of a locally trivial fiber bundle 

\begin{equation}\label{p4}
\dfrac{f_{K-I}}{\|f_{K-I}\|}: \partial F_{I}\m \partial F_{f}\to S^{K-I-1}.
\end{equation}

\vspace{0.2cm}

Now see that the diffeomorphism of \eqref{d1} says that the trivialization in the horizontal map of the diagram \eqref{b1} is given by $$\partial F_{I} \cap f^{-1}_{K-I}(D^{K-I}_{\tau}-\{0\}) \approx \partial F_{f} \times (S_{\tau}^{K-I-1}\times (0,\tau]).$$ Hence
the fiber of the diagonal fibration in the diagram \eqref{p1} should be diffeomorphiuc to $\partial F_{f}\times (0,\tau].$ 

\vspace{0.2cm}

\begin{figure}
	\centering
	\includegraphics[width=0.7\linewidth]{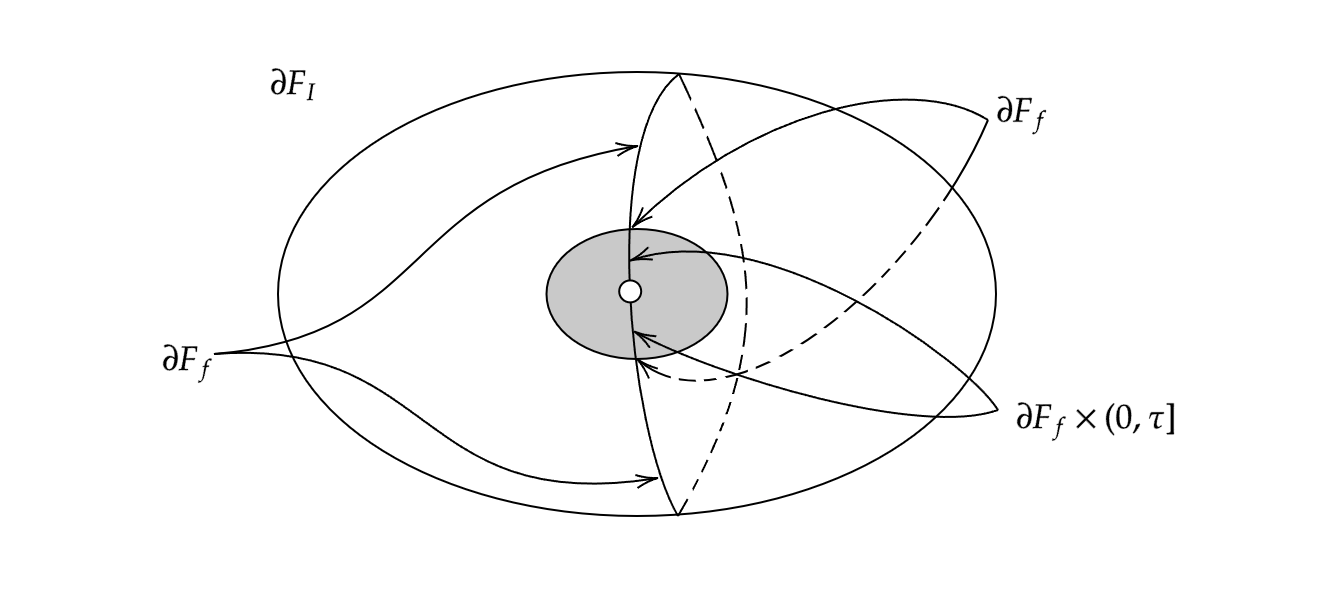}
	\caption{}
	\label{fig:f3}
\end{figure}

On the other hand, the diffeomorphism \eqref{d2} assures that the fiber in the diagonal projection of the diagram \eqref{p3} should be diffeomorphic to $F_{f}.$ The fiber of the boundary trivial fibration \eqref{p2} is clearly diffeomorphic to $\partial F_{f}.$ Therefore, we conclude that the fiber of fibration \eqref{p4} must be diffeomorphic to the gluing (using the identity diffeomorphism on the boundary) $F_{f}\cup_{\partial F_{f}}(\partial F_{f}\times (0,\tau])=F_{f}\m \partial F_{f}.$ See Figure \eqref{fig:f3} and the proof is finished for $K-I-1\geq 1,$ i.e., $K-I\geq 2.$ \endproof 

\begin{remark} Notice that for $K-I=2$, a generalized open-book decomposition is an open-book in the usual sense. We also remark that
in  Theorem \ref{bopen} we assumed $K-I\geq 2;$ If we consider the case $K=I$ then by convention $f_{K-I}:=f_{0}\equiv 0$ and $\bR^{0}=\{0\},$ so there is nothing  to be said. For $I=K-1,$ then the study of the restriction function $f_{K-I}:\partial F_{I}\to \mathbb{R}$ reduces to that of Proposition \ref{pbound} and the construction above leads to a "fibration" over $S^{0}=\{-1,1\}.$ \end{remark}

\subsection{The case of ICIS holomorphic map germ}

Let us consider now a holomorphic map germ $f=(f_{1},\ldots, f_{K}):(\bC^{M+K},0)\to (\bC^{K},0), K\geq 2.$ For $1\leq I < K$ consider the complex projections $\Pi_{I}:(\bC^{K},0)\to (\bC^{I},0), \Pi_{I}(z_{1},\ldots,z_{K})=(z_{1},\ldots,z_{I}),$ and $\Pi_{K-I}:(\bC^{K},0)\to (\bC^{K-I},0), \Pi_{K-I}(z_{1},\ldots,z_{K})=(z_{I+1},\ldots,z_{K}).$ Thus, the compositions as in the diagram \eqref{diag} becomes $f_{I}:=\Pi_{I}\circ f = (f_{1},\ldots, f_{I})$ and $f_{K-I}:=\Pi_{K-I}\circ f = (f_{I+1},\ldots, f_{K}).$

\vspace{0.2cm}

If we assume further that $f$ is ICIS,
it is known that $\Disc f:=f(\Sing f)$ is a complex hypersurface in $\bC^{K},0$ and then for all $\delta>0$ small enough the space $B_{\delta}^{2K}\m \Disc f,$ where $B_{\delta}^{2K}$ stand for the open ball in $\bC^{K}\equiv \bR^{2K}.$ Then, the space $B_{\delta}^{2K}\m \Disc f$ is a connected space. 

\vspace{0.2cm}

In fact, it was proved by H. Hamm, L\^e D. Tr\`ang and by E. Looijenga in \cite{Lo} that there exists $\epsilon_{0}>0$ small enough such that for each $0<\epsilon \leq \epsilon_{0}$ there exists $0<\delta \ll \epsilon$ such that the projection map $$f:\overline{B}_{\epsilon}^{2M+2K}\m f^{-1}(\Disc f)\to B_{\delta}^{2K}\m \Disc f$$ is a smooth locally trivial fibration. Thus, by the connectedness property of the base space the Milnor fiber $F_{f}$ is uniquely defined, up to diffeomorphism.

\vspace{0.2cm}

In addition to that, the ICIS condition is equivalent to the condition $ \Sing f \cap V_{f}=\{0\}.$ Hence, $f$ is tame according to the Definition \ref{tame}. We may also use the argument of Looijenga in \cite{Lo} for a "good representative" to guarantee that, up to a linear coordinate change in $\bC^{K},$ it follows that either map germ $f_{I}$ and $f_{K-I}$ are ICIS as well. 

\vspace{0.2cm}

The following result is an interesting application of our Theorem \ref{bopen} and also it provides an extension of \cite[Proposition 3.2, p. 481]{C}. See also \cite[Chapter 3]{NS} and compare with \cite[Proposition 3.2, p. 481]{C}.

\begin{proposition}\label{contact-cplx} Let $f:(\C^{M+K},0) \to (\bC^{K},0),  K > I \geq 1,$ be a germ of ICIS such that $f_{I}$ and $f_{K-I}$ are ICIS as well. Then, the Milnor projection  $\dfrac{f_{K-I}}{\|f_{K-I}\|}: \partial F_{I}\m \partial F_{f}\to S^{2K-2I-1}$ induces a generalized $(2K-2I-1)$-open-book decomposition on the boundary $\partial F_{I}$ with binding $\partial F_{f}.$
\end{proposition}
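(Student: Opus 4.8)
The plan is to show that Proposition~\ref{contact-cplx} is essentially a corollary of Theorem~\ref{bopen}, once one verifies that the hypotheses of that theorem are met in the complex setting after passing to the underlying real structures. Concretely, I would set $M':=2M+2K$ and $K':=2K$, and view the holomorphic germ $f:(\bC^{M+K},0)\to(\bC^K,0)$ as a real analytic map germ $f:(\bR^{M'},0)\to(\bR^{K'},0)$ with $M'>K'\geq 2$. The real projections induced by the complex projections $\Pi_I,\Pi_{K-I}$ are again coordinate projections, so $f_I$ and $f_{K-I}$ become, in real terms, the compositions studied in Section~\ref{section_1}, now landing in $\bR^{2I}$ and $\bR^{2(K-I)}$ respectively. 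Thus $K'-I'=2(K-I)$, and the condition $K-I\geq 1$ forces $K'-I'\geq 2$, which is exactly the hypothesis $K-I\geq 2$ needed in Theorem~\ref{bopen}; the output sphere is then $S^{K'-I'-1}=S^{2K-2I-1}$, matching the statement.

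The substantive point is to check that $f$, regarded as a real germ, is tame in the sense of Definition~\ref{tame} and satisfies $\Disc f=\{0\}$ in the form used to produce the Milnor tube fibration. For tameness I would invoke the sentence already in the excerpt: the ICIS condition is equivalent to $\Sing f\cap V(f)=\{0\}$, whence $\overline{\Sing(f,g)\setminus V(f)}\cap\Sing f\subseteq\Sing f\cap V(f)$... wait, more carefully, one uses that $\Sing f$ meets $V(f)$ only at $0$, together with the standard fact (Milnor, Hamm--L\^e) that for an ICIS the polar locus $\Sing(f,g)$ accumulates onto $V(f)$ near the origin, so that $\overline{\Sing(f,g)\setminus V(f)}\cap\Sing f$ is a germ supported at $0$. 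I would spell this out as a short lemma or cite the relevant place in \cite{Lo} or \cite{NS}. For the discriminant: it is false that $\Disc f=\{0\}$ for a general ICIS (the discriminant is a hypersurface), so here one cannot literally quote Theorem~\ref{bopen}; instead I would note that what the proof of Theorem~\ref{bopen} actually needs is the Milnor tube fibration \eqref{mtub1} over the base minus the discriminant, together with its restrictions for $f_I$ and $f_{K-I}$, and this is precisely the Hamm--L\^e--Looijenga fibration recalled just before the statement, over the connected base $B_\delta^{2K}\setminus\Disc f$. So the Milnor fibers $F_f$, $F_I$, $F_{K-I}$ are well defined up to diffeomorphism, and the product decompositions $F_I\approx F_f\times D^{2(K-I)}$ and the boundary formula~\eqref{b1} go through verbatim with real disc/sphere dimensions doubled, using that $f:F_I\to D^{2(K-I)}$ is a proper submersion onto a disc.

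With these two points in hand, the construction in the proof of Theorem~\ref{bopen} applies word for word: Lemma~\ref{sub} shows $f_{K-I}\colon\partial F_I\to\bR^{2(K-I)}$ is a submersion (the matrix $A(x)$ now has real entries but the maximal-rank argument is identical, using tameness of $f$), one builds the trivial bundle over a small disc $D^{2(K-I)}_\tau$ giving the tubular neighbourhood $T_\tau(\partial F_f)\approx\partial F_f\times D^{2(K-I)}_\tau$, fibers it by the radial map over $S^{2K-2I-1}$, fibers the complement $\partial F_I\setminus\mathrm{int}\,T_\tau(\partial F_f)\approx F_f\times S^{2K-2I-1}$ by $f_{K-I}/\|f_{K-I}\|$ over $\bR^{2(K-I)}\setminus\{0\}$ retracted to the sphere, and glues the two along the common piece~\eqref{p2} to obtain the locally trivial bundle $f_{K-I}/\|f_{K-I}\|\colon\partial F_I\setminus\partial F_f\to S^{2K-2I-1}$ whose fiber is $F_f\cup_{\partial F_f}(\partial F_f\times(0,\tau])=F_f\setminus\partial F_f$. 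This is by definition a generalized $(2K-2I-1)$-open-book decomposition on $\partial F_I$ with binding $\partial F_f$. The main obstacle is the bookkeeping of the discriminant: one has to be careful that $\Disc f\neq\{0\}$ and therefore replace "tube fibration over $B_\eta\setminus\{0\}$" by "tube fibration over $B_\delta\setminus\Disc f$" throughout, and check that the local triviality and the submersion arguments are insensitive to this change (they are, since everything happens over a neighbourhood of a single regular value of $f$, $f_I$, $f_{K-I}$ simultaneously, which exists by the ICIS hypotheses on all three maps and the openness of the regular-value locus). Once that is said once, the rest is a transcription of the proof of Theorem~\ref{bopen}.
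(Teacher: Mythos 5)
Your overall route is the same as the paper's: the printed proof of Proposition \ref{contact-cplx} consists of the dimension count $2(K-I)-1\geq 1$ followed by ``the same ideas in the proof of Theorem \ref{bopen} work in this case,'' and your realification $M'=2M+2K$, $K'=2K$, $K'-I'=2(K-I)\geq 2$ makes that reduction explicit. You also add something the paper's one-line proof passes over in silence, namely that $\Disc f$ is a hypersurface rather than $\{0\}$ for a genuine ICIS with $K\geq 2$, so Theorem \ref{bopen} cannot be invoked literally and one must work with the Hamm--L\^e--Looijenga fibration over $B_\delta^{2K}\setminus \Disc f$ (which the paper does recall just before the statement). That observation is correct and is the right thing to worry about.

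There is, however, one step in your transcription that fails as stated: the claim that ``the product decompositions $F_I\approx F_f\times D^{2(K-I)}$ and the boundary formula \eqref{b1} go through verbatim.'' They do not. The interior restriction $f:F_I\to \Pi_I^{-1}(y)\cap B_{\eta_1}^{2K}$ is no longer a submersion in the ICIS case: it has critical points over $\Disc f\cap \Pi_I^{-1}(y)$, which is generically nonempty since $\Disc f$ has complex codimension $1$. Concretely, for $K=2$, $I=1$ and $f:(\bC^{n+2},0)\to(\bC^2,0)$ an ICIS, $F_I$ is homotopy equivalent to a wedge of $\mu(f_1)$ spheres of dimension $n+1$ while $F_f$ is a wedge of $\mu(f)$ spheres of dimension $n$, so $F_I\not\approx F_f\times D^2$ unless both Milnor numbers vanish. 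Consequently \eqref{d2}, and with it your identification of the page of the open book as $F_f\setminus\partial F_f$, is not justified by this argument. What does survive is everything the proposition actually asserts: on the sphere the relevant restriction of $f$ is a proper submersion (the sphere avoids $\Sing f$ over a small ball, since $\Sing f\cap V(f)=\{0\}$), so Lemma \ref{sub}, the trivialized tube $T_\tau(\partial F_f)\approx\partial F_f\times D_\tau^{2(K-I)}$, and the gluing of the three fibrations \eqref{p1}--\eqref{p3} all go through, yielding the generalized open book with binding $\partial F_f$. Note that the statement of Proposition \ref{contact-cplx}, unlike that of Theorem \ref{bopen}, does not name the page; if you want to describe it, you need a separate argument (e.g.\ identifying the page with a Milnor fiber of $f_{K-I}$ restricted to $F_I$, in the spirit of \cite{C}), not the product structure of Section \ref{section_1}.
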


\begin{proof} Since $K>I$ then $K-I\geq 1$ and the dimension of the sphere on the target space is $2(K-I)-1\geq 1.$ Hence,  the same ideas in the proof of Theorem \ref{bopen} work in this case.
\end{proof}

\begin{remark}

\begin{enumerate}
\item [(1)] We recall that if one has an isolated complex hypersurface singularity germ, then its link has a canonical contact structure which is Stein fillable (see for instance \cite{PPP}). These statements extend naturally to the setting we envisage in Proposition \ref{contact-cplx}.

\item [(2)] Still the complex ICIS above, the generalized open-book decomposition also extends with the same proof for the pair links  $(\mathcal{L}_{I}, \mathcal{L}_{f}),$ where $\mathcal{L}_{I}:=f_{I}^{-1}(0)\cap S_{\epsilon}^{M-1}$ and  $\mathcal{L}_{f}:=f^{-1}(0)\cap S_{\epsilon}^{M-1},$ for all $\epsilon>0$ smal enough. 

\end{enumerate}

\end{remark}

\section{The Euler characteristic formulae} \label{section_EC}

Let $f:(\bR^{M},0)\to (\bR^{K},0),$ $M>K\geq 2,$ be an analytic map-germ. We will assume along the section that $f$ is tame and $\Disc f =\{0\}.$ Thus, $\dim F_{f}=M-K$ and $\dim \partial F_{f}=M-K-1.$ 

\vspace{0.2cm}

Denote by $\widehat{F_{f}}=F_{f}\cup_{\partial F_{f}}F_{f}$ the closed manifold built by gluing two copies of $F_{f}$ along the boundary $\partial F_{f}$  using the identity diffeomorphism on $\partial F_{f}.$ By the additive property of the Euler characteristic we have that $\chi(\widehat{F_{f}})=2\chi(F_{f})-\chi(\partial F_{f}).$ Hence

\begin{equation}\label{euler}
\chi(\partial F_{f}) = 
     \begin{cases}
       0, &\quad\text{if M-K is even}.\\
       2\chi(F_{f}), & \quad\text{if M-K is odd}. 
     \end{cases}
\end{equation}

Applying again the additive Euler characteristic to the diffeomorphism \eqref{b1} we get 

$$\chi(\partial F_{I})=\chi(\partial F_{f}\times D^{K-I}) + \chi(F_{f} \times S^{K-I-1})-\chi (\partial F_{f} \times S^{K-I-1})=
$$

$$=\chi(\partial F_{f}) + \chi(F_{f}).\chi (S^{K-I-1})-\chi (\partial F_{f}).\chi( S^{K-I-1}).$$

\vspace{0.2cm}

Thus, together with \eqref{euler} it reduces to

\begin{equation}\label{euler1}
\chi(\partial F_{I})= 
     \begin{cases}
       \chi(F_{f}).\chi (S^{K-I-1}), &\text{if M-K is even}.\\
       \chi(F_{f}).\chi (S^{K-I}), & \text{if M-K is odd}. 
     \end{cases}
\end{equation}

We may consider the convention $\chi(S^{-1})=\chi(\emptyset)=0$ and the fact that for the $0-$dimensional sphere $\chi(S^{0})=\chi(\{-1,1\})=2.$ Then all the above discussion, including the special cases of $I=1$ and $I=K$ may be summarized as below. By convention, for $I=1$ we denote $F_{1}:=F_{f}.$

\begin{theorem}\label{pbound2}
 Let $f:(\bR^{M},0)\to (\bR^{K},0),$ $M>K\geq 2,$ be an analytic map-germ, tame with $\Disc f =\{0\}.$ Then:

\begin{enumerate}
  \item [$1)$] $\chi(\partial F_{I})= \chi(F_{f}).\chi (S^{M-I-1}),$ for any $1\leq I \leq K.$ 
  \item [$2)$] {\bf L\^e-Greuel's type formula:} $\chi(\partial F_{I+1})-\chi(\partial F_{I})= 
       2(-1)^{M-I}\chi(F_{f}),$ for any $1\leq I <K.$ 
  \item [$3)$]  $\chi(\partial F_{I})=\chi(\partial F_{I+2}),$ for any $1\leq I < K-1.$    
\end{enumerate}
\end{theorem}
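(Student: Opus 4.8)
The plan is to deduce all three statements from the structural results already established, principally the diffeomorphism \eqref{b1}, the Euler characteristic computations \eqref{euler} and \eqref{euler1}, and the convention $\chi(S^{-1})=0$, $\chi(S^0)=2$. The key point is that $\dim F_f = M-K$, so the parity of $M-K$ governs which branch of \eqref{euler1} applies, and one should rewrite everything so that the sphere dimension absorbs the parity distinction. First I would observe that $\chi(S^{K-I-1})$ and $\chi(S^{K-I})$ differ exactly by whether the exponent is even or odd: a sphere of even dimension has Euler characteristic $2$ and one of odd dimension has Euler characteristic $0$. So in \eqref{euler1}, when $M-K$ is even the relevant sphere is $S^{K-I-1}$ and when $M-K$ is odd it is $S^{K-I}$; in both cases the dimension of the sphere appearing has the same parity as $(K-I-1)+(M-K)=M-I-1$. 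Hence $\chi(\partial F_I)=\chi(F_f)\cdot \chi(S^{M-I-1})$ uniformly, which is exactly statement $1)$. One must only check the extreme cases $I=1$ (where $F_1:=F_f$ and one should recover \eqref{euler}, i.e. $\chi(\partial F_f)=\chi(F_f)\chi(S^{M-K-1})$, consistent because $\chi(S^{\text{even}})=2$, $\chi(S^{\text{odd}})=0$) and $I=K$ (where $\partial F_K=\partial F_f$ by the earlier identification, and $\chi(S^{M-K-1})$ again gives the right answer), together with the degenerate convention $\chi(S^{-1})=0$ when $M=I+1$.

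Next, for statement $2)$, the L\^e--Greuel type formula, I would simply subtract: $\chi(\partial F_{I+1})-\chi(\partial F_I)=\chi(F_f)\bigl(\chi(S^{M-I-2})-\chi(S^{M-I-1})\bigr)$. The two sphere dimensions $M-I-2$ and $M-I-1$ have opposite parity, so one of the two Euler characteristics is $0$ and the other is $2$; the sign is determined by which one is even. A clean way to record this is $\chi(S^{n})-\chi(S^{n+1}) = 1-(-1)^n = \ldots$, but more directly $\chi(S^{n}) = 1+(-1)^n$, so $\chi(S^{M-I-2})-\chi(S^{M-I-1}) = (1+(-1)^{M-I-2})-(1+(-1)^{M-I-1}) = (-1)^{M-I-2}-(-1)^{M-I-1} = (-1)^{M-I}+(-1)^{M-I}=2(-1)^{M-I}$. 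Multiplying by $\chi(F_f)$ gives $2(-1)^{M-I}\chi(F_f)$, which is statement $2)$. Statement $3)$ is then immediate from statement $1)$: $\chi(\partial F_{I+2})=\chi(F_f)\chi(S^{M-I-3})$ and $\chi(\partial F_I)=\chi(F_f)\chi(S^{M-I-1})$, and $S^{M-I-3}$ and $S^{M-I-1}$ have the same parity of dimension, hence equal Euler characteristic. Alternatively, $3)$ follows by applying $2)$ twice and noting $2(-1)^{M-I}+2(-1)^{M-I-1}=0$.

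The main obstacle I anticipate is purely bookkeeping at the boundary values of $I$: one has to make sure that the formula $\chi(\partial F_I)=\chi(F_f)\chi(S^{M-I-1})$ genuinely subsumes the two cases of \eqref{euler1} for all $1\le I\le K$, including $I=1$ where $\partial F_1=\partial F_f$ should be read off from \eqref{euler} rather than \eqref{euler1}, and $I=K$ where $f_{K-I}\equiv 0$ and the ``fibration'' picture degenerates, so one relies instead on the identification $\partial F_K\approx\partial F_f$ together with the convention $\chi(S^{-1})=0$ (needed precisely when $M-K$ is even and $I=K$, forcing $\chi(\partial F_f)=0$, consistent with the even case of \eqref{euler}). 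Once these edge cases are matched against the stated conventions, the three formulae are formal consequences of the identity $\chi(S^n)=1+(-1)^n$ and the additivity of $\chi$ already exploited in deriving \eqref{euler1}.
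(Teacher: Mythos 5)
Your proposal is correct and takes essentially the same route as the paper: item $1)$ is the parity-unified restatement of the identity \eqref{euler1} (the sphere dimensions $K-I-1$, resp.\ $K-I$, have the same parity as $M-I-1$ in the respective cases, and the conventions $\chi(S^{-1})=0$, $\chi(S^{0})=2$ handle the endpoint $I=K$), while items $2)$ and $3)$ follow by subtraction using $\chi(S^{n})=1+(-1)^{n}$. The paper's proof is just a terser version of this same bookkeeping.
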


\proof The item $1)$ follows from the identity \eqref{euler1}. To prove the item $2)$ just exchange $I$ by $I+1$ in the item $1)$ and take the difference. The item $3)$ is immediate from item $1).$  \endproof

\subsection{Relating the Euler characteristic of the links}

Consider again $f:(\bR^{M},0)\to (\bR^{K},0),$ $M>K\geq 2,$ a tame polynomial map-germ with $\Disc f =\{0\}.$ For each $1< I \leq K$ the map $f_{I}:(\bR^{M},0)\to (\bR^{I},0)$ admits the Milnor tube fibrations \eqref{btub2} and by restriction  it induces the fibrations $f_{I}:B^{M}_{\epsilon}\cap f_{I}^{-1}(S_{\eta}^{I-1})\to S_{\eta}^{I-1}$ with Milnor fibers $F_{I}$ and the fibration $f_{I}:S^{M-1}_{\epsilon}\cap f_{I}^{-1}(S_{\eta}^{I-1})\to S_{\eta}^{I-1}$ with fiber $\partial F_{I}.$   

\vspace{0.2cm}

Denote by $T_{\eta}(F_{I}):=B^{M}_{\epsilon}\cap f_{I}^{-1}(S_{\eta}^{I})$ the Milnor tube of $f_{I}$ and by $\mathcal{L}_{I}:=f_{I}^{-1}(0)\cap S_{\epsilon}^{M-1}$ the respective link.

\vspace{0.2cm}

We may consider $\eta$ small enough such that the sphere $S_{\epsilon}^{M-1}$ is homeomorphic to the gluing $T_{\eta}(f_{I})\cup_{\partial T_{\eta}(f_{I})}N_{\eta}(f_{I}),$ where $\mathcal{L}_{I} \subset N_{\eta}(f_{I}):=f_{I}^{-1}(B^{I}_{\eta})$ is a semi-algebraic neighbourhood that retract to the link $\mathcal{L}_{I},$ as proved by A. Durfee in \cite{Durfee}.

\vspace{0.2cm}

Thus, 
$$\chi(S_{\epsilon}^{M-1})=\chi(T_{\eta}(f_{I}))+\chi(N_{\eta}(f_{I}))-\chi(\partial T_{\eta}(f_{I}))=\chi(F_{I})\chi(S^{I-1})+\chi(\mathcal{L}_{I})-\chi(\partial F_{I})\chi(S^{I-1}).$$

\vspace{0.2cm}

Hence,
\begin{equation} \label{euler2}
\chi(\mathcal{L}_{I})=\chi(S^{M-1})- \chi(F_{f})\chi(S^{I-1})+\chi(\partial F_{I})\chi(S^{I-1})
\end{equation}

\begin{lemma} \label{lemma_link}
 The following holds true: $$\chi(\mathcal{L}_{I})=\chi(S^{M-1})+(-1)^{M-I-1}\chi(F_{f})\chi(S^{I-1}).$$
\end{lemma}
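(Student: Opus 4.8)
The plan is to combine the already-derived identity \eqref{euler2} with the Euler characteristic computation for $\partial F_I$ recorded in Theorem \ref{pbound2}. Indeed, \eqref{euler2} reads
\[
\chi(\mathcal{L}_{I})=\chi(S^{M-1})-\chi(F_{f})\chi(S^{I-1})+\chi(\partial F_{I})\chi(S^{I-1}),
\]
so the whole content of the lemma is the algebraic simplification of the last two terms once we substitute $\chi(\partial F_I)=\chi(F_f)\,\chi(S^{M-I-1})$ from part $1)$ of Theorem \ref{pbound2}. Thus
\[
\chi(\mathcal{L}_{I})=\chi(S^{M-1})+\chi(F_f)\,\chi(S^{I-1})\bigl(\chi(S^{M-I-1})-1\bigr),
\]
and everything reduces to understanding the factor $\chi(S^{I-1})\bigl(\chi(S^{M-I-1})-1\bigr)$.

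First I would recall the elementary parity facts about spheres: $\chi(S^{n})=2$ if $n$ is even and $\chi(S^{n})=0$ if $n$ is odd (with the conventions $\chi(S^{-1})=0$, $\chi(S^0)=2$ already fixed in the text). A short case analysis on the parities of $I-1$ and $M-I-1$ then evaluates the product. If $I-1$ is odd, the factor $\chi(S^{I-1})=0$ kills the correction term, and one checks that $(-1)^{M-I-1}\chi(F_f)\chi(S^{I-1})=0$ as well, so both sides equal $\chi(S^{M-1})$. If $I-1$ is even, then $\chi(S^{I-1})=2$, and one splits further according to whether $M-I-1$ is even (so $\chi(S^{M-I-1})-1=1$ and $(-1)^{M-I-1}=1$, giving correction $2\chi(F_f)$) or odd (so $\chi(S^{M-I-1})-1=-1$ and $(-1)^{M-I-1}=-1$, again giving correction $-2\chi(F_f)$ on both sides). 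In every case the right-hand side collapses to $\chi(S^{M-1})+(-1)^{M-I-1}\chi(F_f)\chi(S^{I-1})$, which is the claimed formula.

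There is essentially no obstacle here: the lemma is a bookkeeping consequence of results established earlier, and the only thing to be careful about is keeping the sphere-dimension conventions straight (in particular that $\chi(S^{M-I-1})-1$ equals $(-1)^{M-I-1}$ precisely when $I-1$ is even, and that when $I-1$ is odd both the naive correction and the claimed correction vanish). An alternative, slightly slicker route that avoids the case split is to observe that for all integers $n\geq -1$ one has $\chi(S^{I-1})\bigl(\chi(S^{M-I-1})-1\bigr)=(-1)^{M-I-1}\chi(S^{I-1})$: when $\chi(S^{I-1})=0$ both sides are $0$, and when $\chi(S^{I-1})\neq 0$ (so $I-1$ even, $\chi(S^{I-1})=2$) one has $\chi(S^{M-I-1})-1=\pm 1=(-1)^{M-I-1}$ since $M-I-1$ and $M-1$ have the same parity. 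Substituting this identity into the displayed form of \eqref{euler2} immediately yields $\chi(\mathcal{L}_I)=\chi(S^{M-1})+(-1)^{M-I-1}\chi(F_f)\chi(S^{I-1})$, completing the proof.
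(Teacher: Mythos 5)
Your proof is correct and follows essentially the same route as the paper, which simply substitutes the formula $\chi(\partial F_I)=\chi(F_f)\chi(S^{M-I-1})$ from Theorem \ref{pbound2} into equation \eqref{euler2} and simplifies by parity; your case analysis (or the identity $\chi(S^n)-1=(-1)^n$) makes explicit the bookkeeping the paper leaves implicit. One tiny quibble: your parenthetical ``since $M-I-1$ and $M-1$ have the same parity'' is neither needed nor true when $I$ is odd --- the identity $\chi(S^{M-I-1})-1=(-1)^{M-I-1}$ holds outright for $M-I-1\geq 0$ --- but this does not affect the argument.
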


\proof The proof follows from Proposition \ref{euler1} and equation \eqref{euler2}. \endproof
\vspace{0.2cm}












The next result provides in particular a second proof of \cite[Proposition 7.1, p. 4861]{DR}.

\begin{proposition}\label{pbound3} Let $f:(\bR^{M},0)\to (\bR^{K},0),$ $M>K\geq 2,$ be a tame polynomial map-germ with $\Disc f =\{0\}.$ Then: 

\begin{enumerate}

\item [$1)$] $\chi(\mathcal{L}_{I+1})- \chi(\mathcal{L}_{I})=
       2(-1)^{M-I}\chi(F_{f}),$ for each $1\leq I < K.$
\item [$2)$] $\chi(\mathcal{L}_{I+2})= \chi(\mathcal{L}_{I}),$ for each $1\leq I < K-1.$
\end{enumerate} 
\end{proposition}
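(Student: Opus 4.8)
The plan is to derive Proposition \ref{pbound3} directly from Lemma \ref{lemma_link}, which expresses $\chi(\mathcal{L}_I)$ in closed form. First I would recall that Lemma \ref{lemma_link} gives
$$\chi(\mathcal{L}_{I})=\chi(S^{M-1})+(-1)^{M-I-1}\chi(F_{f})\chi(S^{I-1})$$
for every $1\leq I \leq K$, under the standing hypotheses (tame, $\Disc f=\{0\}$, and here polynomial so that the tube fibrations and Durfee's retraction are available). Since $M>K\geq 2$, $S^{M-1}$ is an odd-dimensional sphere is not guaranteed, so I would keep $\chi(S^{M-1})$ as a symbol and note it cancels in all the differences below; the only arithmetic needed is the value $\chi(S^{I-1})=1+(-1)^{I-1}$.

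For item $(1)$, I would replace $I$ by $I+1$ in the formula of Lemma \ref{lemma_link} to get $\chi(\mathcal{L}_{I+1})=\chi(S^{M-1})+(-1)^{M-I-2}\chi(F_{f})\chi(S^{I})$, subtract the expression for $\chi(\mathcal{L}_I)$, observe the $\chi(S^{M-1})$ terms cancel, and then compute
$$\chi(\mathcal{L}_{I+1})-\chi(\mathcal{L}_{I})=(-1)^{M-I}\chi(F_{f})\,\chi(S^{I})+(-1)^{M-I}\chi(F_{f})\,\chi(S^{I-1}),$$
using $(-1)^{M-I-2}=(-1)^{M-I}$ and $-(-1)^{M-I-1}=(-1)^{M-I}$. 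Since $\chi(S^{I})+\chi(S^{I-1})=(1+(-1)^{I})+(1+(-1)^{I-1})=2$, this collapses to $2(-1)^{M-I}\chi(F_f)$, which is the claimed L\^e--Greuel type identity. (One may also observe that this matches item $2)$ of Theorem \ref{pbound2} for $\partial F_I$, reflecting the parallel structure between the links and the Milnor boundaries.) For item $(2)$, I would apply item $(1)$ twice, at $I$ and at $I+1$, and add: $\chi(\mathcal{L}_{I+2})-\chi(\mathcal{L}_I)=2(-1)^{M-I-1}\chi(F_f)+2(-1)^{M-I}\chi(F_f)=0$, since the two signs are opposite; this requires $1\leq I<K-1$ so that $\mathcal{L}_{I+2}$ is defined.

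There is essentially no obstacle here: the proposition is a formal consequence of Lemma \ref{lemma_link}, so the whole proof is the two sign-bookkeeping computations above. The only point deserving a word of care is making sure the index $I+1$ (resp.\ $I+2$) still lies in the admissible range $1\leq \cdot \leq K$ so that Lemma \ref{lemma_link} applies to it, which is exactly why the hypotheses are stated as $1\leq I<K$ in $(1)$ and $1\leq I<K-1$ in $(2)$; and checking that $f_{I+1}$ (resp.\ $f_{I+2}$) inherits tameness and $\Disc=\{0\}$ from $f$, which was already established in the proof of Proposition \ref{pbound}. Thus the proof reduces to: quote Lemma \ref{lemma_link}, take differences, use $\chi(S^{j})=1+(-1)^{j}$.
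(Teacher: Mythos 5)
Your proof is correct and follows essentially the same route as the paper: the paper takes the difference of equation \eqref{euler2} and then substitutes Theorem \ref{pbound2}(1), whereas you substitute first by quoting Lemma \ref{lemma_link} (which is exactly that substitution) and then subtract, so the two computations are the same up to the order of operations, and your telescoping argument for item $2)$ matches the paper's ``item $2)$ is trivial.'' The sign bookkeeping and the identity $\chi(S^{I})+\chi(S^{I-1})=2$ are handled correctly.
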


\proof In the equation \eqref{euler2} just exchange $I$ by $I+1$ and take the difference. Then we have

\vspace{0.2cm}

$\chi(\mathcal{L}_{I+1})- \chi(\mathcal{L}_{I})=\chi( F_{f})\chi(S^{I-1})-\chi(\partial F_{I})\chi(S^{I-1})-\chi(F_{f})\chi(S^{I})+\chi(\partial F_{I+1})\chi(S^{I}).$

\vspace{0.2cm}

Now we may apply Proposition \ref{pbound2}, item $1),$ to get

\vspace{0.2cm}

$\chi(\mathcal{L}_{I+1})- \chi(\mathcal{L}_{I})=(-1)^{M-I}\chi(F_{f})\chi(S^{I-1})+(-1)^{M-I}\chi(F_{f})\chi(S^{I})=2(-1)^{M-I}\chi(F_{f}).$ 

\vspace{0.2cm}

This ends the proof of item $1).$ Item $2)$ is trivial. \endproof 

\begin{remark}

\item [(1)] We point out that the L\^e-Greuel type formula obtained in the Theorem \ref{pbound2}, item (2), is somehow similar to that obtained in \cite[Theorem 1, p. 3]{CSG}, but with the difference that in \cite{CSG} the authors worked with the Euler number of the Milnor fibers, instead of its boundary. 

\item [(2)] In view of the Proposition \ref{pbound2} and the Proposition \ref{pbound3}, we can see that for all $1\leq I <K$ we have  $\chi(\partial F_{I+1})-\chi(\partial F_{I})=\chi(\mathcal{L}_{I+1})-\chi(\mathcal{L}_{I}).$ Thus, $\chi(\partial F_{I+1})-\chi(\mathcal{L}_{I+1})=\chi(\partial F_{I})-\chi(\mathcal{L}_{I})=\cdots =\chi(\partial F_{2})-\chi(\mathcal{L}_{2})=\chi(\partial F_{1})-\chi(\mathcal{L}_{1}).$ Hence, it suggests the following definition.

\end{remark}

\begin{definition} 
Let $f:(\bR^{M},0)\to (\bR^{K},0),$ $M>K\geq 2,$ be a tame polynomial map-germ with $\Disc f =\{0\}$. The degree of {\it degeneracy on the Milnor boundary of $f$} is defined as the number
$$DB(f):=\chi(\partial F_{1})-\chi(\mathcal{L}_{1}) \, .$$
\end{definition}

Clearly, if $f$ has an isolated singularity at the origin one has that $DB(f)=0$. 

\section{On the boundaries of the Milnor fibers and the links on each stage I.}

In the real setting we do not expect to prove theorems regarding the degree of connectivity of the Milnor fibers, its boundaries nor the respective links of $f_{I},$ on each stage $I$. Notwithstanding, in the case where the dimension $M$ of the source space is even, for all $I,$ $1\leq I \leq K,$ we may write $\chi(\partial F_{I})=\chi(S^{I+1})\chi(F_{I})$ and as an application of Lemma \ref{lemma_link} we conclude that $\chi(\partial F_{I})=\chi(\mathcal{L}_{I}),$ and hence $DB(f)=0.$ However, if the source space $M$ is odd-dimensional some interesting relations between the boundaries of the Milnor fiber, the links of the singularities and the Milnor fibers on the Milnor tubes come up on each stage $I$, and it provides a way to distinguish between the homotopy type of the Milnor boundary and the link of the singularities $f_{I}$ for each $1\leq I \leq K,$ as described below.

\vspace{0.2cm}

We first remind that for odd dimension $M\geq 2$ the equation \eqref{euler2} becomes 
\begin{equation*}
(*):~ \chi(\mathcal{L}_{I})=2 - \chi(F_{f})\chi(S^{I-1})+\chi(\partial F_{I})\chi(S^{I-1}).
\end{equation*}

This allows us to prove the below result whose proof we left as an exercise.

\begin{lemma} \label{carac} Let $M$ odd and $I$ such that $M>K \geq I \geq 2.$ Then, for each $I$ the following conditions hold for the links $\mathcal{L}_{I}$ and the boundaries $\partial F_{I}$ of the Milnor fibers $F_{I}:$

\begin{enumerate}
    \item [(1)] if $I$ is even then $\chi(\mathcal{L}_{I})=2,$ by equation $(*).$ Moreover, since $\dim F_{I}=M-I$ is odd then $\chi(\partial F_{I})=2\chi(F_{I})=2\chi(F_{f});$ 

    \item [(2)] if $I$ is odd then $\chi(\mathcal{L}_{I})=2-2\chi(F_{f}),$ by equation $(*).$ Moreover, since $\dim \partial F_{I}=M-I-1$ is odd then $\chi(\partial F_{I})=0.$
\end{enumerate}
\end{lemma}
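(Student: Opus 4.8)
The plan is to verify the two cases directly from equation $(*)$ together with the Euler characteristic formulae of Theorem~\ref{pbound2} and the parity dichotomy~\eqref{euler}, reducing everything to a bookkeeping of parities. I would start by recording, for $M$ odd and $2\leq I\leq K$, the dimensions in play: $\dim F_{f}=M-K$, $\dim F_{I}=M-I$, and $\dim \partial F_{I}=M-I-1$; since $F_{I}\approx F_{f}\times D^{K-I}$ (the homeomorphism established before Proposition~\ref{pbound}), one has $\chi(F_{I})=\chi(F_{f})$, which I will use freely.

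\textbf{Case $I$ even.} Since $M$ is odd, $I$ even forces $S^{I-1}$ to be an odd-dimensional sphere, so $\chi(S^{I-1})=0$. Plugging this into $(*)$ kills the last two terms and leaves $\chi(\mathcal{L}_{I})=2=\chi(S^{M-1})$ (note $\chi(S^{M-1})=2$ precisely because $M$ is odd), which is the first assertion. For the statement on $\partial F_{I}$: with $M$ odd and $I$ even, $M-I$ is odd, i.e.\ $\dim F_{I}$ is odd; applying~\eqref{euler} to $f_{I}$ in the role of $f$ (legitimate because $f$ tame with $\Disc f=\{0\}$ implies $f_{I}$ tame with $\Disc f_{I}=\{0\}$, as noted in the proof of Proposition~\ref{pbound}) gives $\chi(\partial F_{I})=2\chi(F_{I})$, and then $\chi(F_{I})=\chi(F_{f})$ finishes the case.

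\textbf{Case $I$ odd.} Now $M$ odd and $I$ odd make $S^{I-1}$ even-dimensional, so $\chi(S^{I-1})=2$. Substituting into $(*)$ yields $\chi(\mathcal{L}_{I})=2-2\chi(F_{f})+2\chi(\partial F_{I})$, and it remains to see $\chi(\partial F_{I})=0$. Here $\dim\partial F_{I}=M-I-1$ is odd (odd minus odd minus one), so $\partial F_{I}$ is a closed odd-dimensional manifold and its Euler characteristic vanishes by Poincar\'e duality; substituting back gives $\chi(\mathcal{L}_{I})=2-2\chi(F_{f})$ as claimed. One may equally invoke Theorem~\ref{pbound2}(1), $\chi(\partial F_{I})=\chi(F_{f})\chi(S^{M-I-1})$, and observe $M-I-1$ odd forces $\chi(S^{M-I-1})=0$.

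There is essentially no obstacle: the only things to be careful about are the sphere-parity conventions $\chi(S^{-1})=0$, $\chi(S^{0})=2$ (already fixed in the text just before Theorem~\ref{pbound2}), the fact that $\chi(S^{M-1})=2$ uses $M$ odd, and the justification that $f_{I}$ inherits tameness and $\Disc f_{I}=\{0\}$ so that~\eqref{euler} and Theorem~\ref{pbound2} apply with $f_{I}$ in place of $f$. Since the paper already flags this as "an exercise", I would present it in exactly this compressed form, case by case, rather than belabouring the arithmetic.
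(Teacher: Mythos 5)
Your proof is correct and is precisely the parity bookkeeping the paper intends: the lemma is explicitly left as an exercise to be derived from equation $(*)$ together with \eqref{euler} and Theorem \ref{pbound2}, and your case analysis fills that in, including the point (glossed over by the lemma's own phrasing) that in the odd-$I$ case one must first establish $\chi(\partial F_{I})=0$ before substituting into $(*)$ to get $\chi(\mathcal{L}_{I})=2-2\chi(F_{f})$.
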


Now we are ready to state the main result of this section.

\begin{theorem}\label{carac2} Consider $M$ odd, $M>K \geq I \geq 2.$ Let $f:(\mathbb{R}^{M},0)\to (\mathbb{R}^{K},0)$ and $f_{I}:(\mathbb{R}^{M},0)\to (\mathbb{R}^{I},0)$ be real analytic map germs as in the diagram \eqref{diag}. Then, $\chi(F_{f})=1$ if and only if $\chi(\partial F_{I})=\chi(\mathcal{L}_{I})$ in some stage $I.$ Moreover, if the last equality holds true on any stage $I$ it also will holds true on all stages $I,$ $2\leq I \leq K < M.$ 
\end{theorem}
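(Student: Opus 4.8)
The plan is to derive everything from the two cases of Lemma \ref{carac} together with the formula $(*)$. First I would record the consequences of Lemma \ref{carac} for the two parities of $I$. If $I$ is even, then $\chi(\mathcal L_I)=2$ and $\chi(\partial F_I)=2\chi(F_f)$, so the equality $\chi(\partial F_I)=\chi(\mathcal L_I)$ holds if and only if $2\chi(F_f)=2$, i.e. if and only if $\chi(F_f)=1$. If $I$ is odd, then $\chi(\mathcal L_I)=2-2\chi(F_f)$ and $\chi(\partial F_I)=0$, so the equality $\chi(\partial F_I)=\chi(\mathcal L_I)$ holds if and only if $0=2-2\chi(F_f)$, i.e. again if and only if $\chi(F_f)=1$. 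Thus in \emph{either} parity the single equation $\chi(\partial F_I)=\chi(\mathcal L_I)$ is equivalent to $\chi(F_f)=1$.

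This already gives both directions of the first claim: if $\chi(F_f)=1$, then by the computation above $\chi(\partial F_I)=\chi(\mathcal L_I)$ for every admissible $I$ (in particular for some stage $I$), and conversely if $\chi(\partial F_I)=\chi(\mathcal L_I)$ holds in some stage $I$, then by the equivalence just established (applied to that particular $I$, whatever its parity) we get $\chi(F_f)=1$. The ``moreover'' part is then immediate: the condition $\chi(F_f)=1$ does not depend on $I$, so once the equality is known at one stage we conclude $\chi(F_f)=1$, and then re-running the forward implication for every $I$ with $2\le I\le K<M$ shows $\chi(\partial F_I)=\chi(\mathcal L_I)$ at all stages.

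Concretely I would structure the write-up as: (i) invoke Lemma \ref{carac}(1) and (2) to get the two displayed equivalences ``$\chi(\partial F_I)=\chi(\mathcal L_I)\iff \chi(F_f)=1$'', one for even $I$ and one for odd $I$; (ii) observe that neither equivalence depends on which $I$ is chosen; (iii) assemble the biconditional and the propagation statement. The only mild subtlety, which is really just bookkeeping, is to make sure the stated range $2\le I\le K<M$ matches the hypotheses of Lemma \ref{carac} (which requires $M>K\ge I\ge 2$ and $M$ odd), so that the lemma is genuinely applicable at every stage under discussion; there is no analytic or geometric obstacle here, as all the hard work has already been done in establishing the open-book decomposition (Theorem \ref{bopen}), the Euler-characteristic formulae (Theorem \ref{pbound2}), Lemma \ref{lemma_link}, and Lemma \ref{carac}.
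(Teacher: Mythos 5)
Your proposal is correct and follows essentially the same route as the paper: both reduce the statement to the parity case analysis of Lemma \ref{carac}, check that in each parity the equality $\chi(\partial F_I)=\chi(\mathcal{L}_I)$ is equivalent to $\chi(F_f)=1$, and note that this condition is independent of $I$, which gives the propagation to all stages. Your write-up is in fact slightly more explicit than the paper's in spelling out the two equivalences.
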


\begin{proof} The proof follows from Lemma \ref{carac}.

\vspace{0.2cm}

For the "if" case, we can see that $\chi(F_{I})=\chi(F_{f})=1$ implies that the two quantities $\chi(\partial F_{I})=\chi (\mathcal{L}_{I})$ in the either cases of Lemma \ref{carac}. Moreover, the equality in some stage $I$ clearly implies that on all $I,$ $2\leq I \leq K <M.$ 

\vspace{0.2cm}

For the "only if" case, if we suppose that in some stage (even or odd) $I$ the equality $\chi(\partial F_{I})=\chi (\mathcal{L}_{I})$ holds true, then again by Lemma \ref{carac} we conclude that $2\chi(F_{I})=2.$ Therefore, $\chi(F_{f})=1.$
\end{proof}

The next result provides a natural class of map germs where one of two conditions above holds true. Beside that, it also provides another proof of \cite[Proposition 3, item ii), p. 71]{ADD}.

\begin{corollary}\label{isola} Let $f:(\bR^{M},0)\to (\bR^{K},0)$, with $M>K\geq 2$ and $M$ odd, be a real analytic map-germ with an isolated critical point at the origin. Then, for each $I,$ $M > K \geq I \geq 1,$ we have that $\chi(F_I)=1.$ 
\end{corollary}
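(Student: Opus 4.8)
The plan is to reduce Corollary \ref{isola} to the case already handled and then invoke the two preceding results in sequence. First I would dispose of the low-stage cases where the machinery of the section does not directly apply: for $I=1$ the statement $\chi(F_1)=\chi(F_f)=1$ is exactly the conclusion of Corollary \ref{cor_a2} (or equivalently follows from the isolated-singularity computation that $\partial F_f$ is diffeomorphic to the link $\mathcal L_f$, which forces the Euler-characteristic identity $\chi(\partial F_f)=\chi(\mathcal L_f)$, and then Proposition \ref{prop_a1} gives $\chi(F_f)=1$). So the real content is for $2\le I\le K$, and there I would argue as follows.

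The key observation is that an isolated critical point is in particular a tame map germ with $\Disc f=\{0\}$, and moreover $f$ having an isolated critical point at the origin forces $f_I$ to have an isolated critical point too — equivalently $\partial F_I$ is diffeomorphic to the link $\mathcal L_I$, since near the singularity the only contribution to the discrepancy between the Milnor boundary and the link comes from the non-isolated part of $\Sing f_I$. Granting this, we get $\chi(\partial F_I)=\chi(\mathcal L_I)$ for every $I$ in the range $2\le I\le K$. Now Theorem \ref{carac2} applies verbatim: the equality $\chi(\partial F_I)=\chi(\mathcal L_I)$ holding at some (indeed every) stage $I$ with $2\le I\le K<M$ is equivalent to $\chi(F_f)=1$. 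Finally, since $M-I$ has a fixed parity type within the section's setup and $F_I\approx F_f\times D^{K-I}$ (the homeomorphism \eqref{b1} at the level of fibers of the Milnor tube, which gives $\chi(F_I)=\chi(F_f)$ because $D^{K-I}$ is contractible), we conclude $\chi(F_I)=\chi(F_f)=1$ for each $I$ with $1\le I\le K$, which is the assertion.

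Concretely, the steps in order are: (i) note that isolated critical point $\Rightarrow$ tame and $\Disc f=\{0\}$, so all hypotheses of the section hold; (ii) show that an isolated critical point of $f$ forces an isolated critical point of each $f_I$, hence $\partial F_I$ is diffeomorphic to $\mathcal L_I$ and in particular $\chi(\partial F_I)=\chi(\mathcal L_I)$; (iii) invoke Theorem \ref{carac2} to deduce $\chi(F_f)=1$ (for the range $2\le I\le K$), and handle $I=1$ via Corollary \ref{cor_a2}; (iv) use $\chi(F_I)=\chi(F_f)$, which follows from $F_I\approx F_f\times D^{K-I}$, to conclude $\chi(F_I)=1$ for all $I$.

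The main obstacle I anticipate is step (ii): verifying that an isolated critical point of $f$ genuinely propagates to $f_I=\Pi_I\circ f$, i.e. that composing with a linear projection does not create a non-isolated critical locus near the origin. This should follow from a rank argument on the Jacobian — if $df(x)$ has rank $K$ for $x\neq 0$ near $0$, then $df_I(x)=\Pi_I\circ df(x)$ has rank $I$ for $x\neq 0$ near $0$ as well, since $\Pi_I$ is surjective — but one must be slightly careful that "isolated critical point'' for a map to $\bR^I$ is the right notion and that the diffeomorphism between $\partial F_I$ and $\mathcal L_I$ in the isolated case is available (this is the real-analytic analogue of the classical fact, used implicitly just before Corollary \ref{cor_a2}). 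Everything else is a direct application of the Euler-characteristic bookkeeping already established in Theorem \ref{carac2} and Lemma \ref{carac}.
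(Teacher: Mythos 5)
Your argument is correct and follows essentially the same route as the paper's: the Jacobian rank argument giving $\Sing f_{I}\subseteq \Sing f\subseteq\{0\}$ (which is exactly what diagram \eqref{diagram1} encodes), the classical fact that for an isolated critical point $\partial F_{I}$ is diffeomorphic to the link $\mathcal{L}_{I}$, and then Theorem \ref{carac2} together with $\chi(F_{I})=\chi(F_{f})$. The only cosmetic issue is that the auxiliary results you cite for the stage $I=1$ do not appear in the final text; the same content is supplied by applying Theorem \ref{carac2} at any stage $I\geq 2$ (which exists since $K\geq 2$) and then using $\chi(F_{1})=\chi(F_{f})$, exactly as you do in step (iv).
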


\begin{proof} The proof might be left as exercise, but we will screatch it below for the sake of convenience. 

\vspace{0.2cm}

Since $f$ have an isolated singular point at the origin, one may apply the diagram \eqref{diagram1} to get that $\Sing (f_{I})\subseteq \{0\}$ for each fixed $I.$ It is enough to consider the case $\Sing (f_{I})=\{0\},$ because the case $\Sing (f_{I})=\emptyset$ the result follows as an easy  application of the Inverse Function Theorem version for map germs.

\vspace{0.2cm}

Now, if we assume further that $M$ is odd then for each $I$ the link $\mathcal{L}_{I}$ must be not empty, and it is in fact a smooth manifold diffeomorphic to $\partial F_{I}$ and thus $\chi(\partial F_{I})=\chi(\mathcal{L}_{I}).$ Therefore, one may apply the Theorem \ref{carac2} and conclude that $\chi(F_{I})=1,$ for each $1\leq I \leq K < M.$ \end{proof}

\begin{remark}
For the existence of map germ $(\bR^{M},0)\to (\bR^{K},0),$ M odd, $M>K\geq 2,$ with isolated critical point at the origin, the reader may consult \cite[section 5.2, p. 101]{AHSS}. 
\end{remark}
\begin{corollary} Let $M > K \geq I \geq 1$ and $f$ be as in Theorem \ref{carac2}. If $\chi(F_f) \neq 1$ then at all stages $I,$ the Milnor boundary $\partial F_{I}$ and the respective link $\mathcal{L}_{I}$ of $f_{I}$ cannot be homotopically equivalent.
\end{corollary}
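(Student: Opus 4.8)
The plan is to argue by contraposition, since this corollary is essentially the contrapositive of Theorem \ref{carac2} once one observes that a homotopy equivalence between two topological spaces forces their Euler characteristics to agree.

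First I would recall that the Euler characteristic is a homotopy invariant. Hence if $\partial F_{I}$ and $\mathcal{L}_{I}$ were homotopically equivalent at some stage $I$ with $2 \leq I \leq K$, we would have $\chi(\partial F_{I}) = \chi(\mathcal{L}_{I})$, and Theorem \ref{carac2} would immediately yield $\chi(F_{f}) = 1$, contradicting the hypothesis $\chi(F_{f}) \neq 1$. Therefore, for every $I$ with $2 \leq I \leq K$, the spaces $\partial F_{I}$ and $\mathcal{L}_{I}$ cannot be homotopically equivalent.

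The remaining stage $I = 1$ is not covered by Theorem \ref{carac2} and must be handled directly. Here $\partial F_{1} = \partial F_{f}$ and $\mathcal{L}_{1} = \mathcal{L}_{f}$. Since $M$ is odd, $M-2$ is odd, so $\chi(S^{M-2}) = 0$, and Theorem \ref{pbound2}, item $1)$, gives $\chi(\partial F_{1}) = \chi(F_{f})\,\chi(S^{M-2}) = 0$. On the other hand, Lemma \ref{lemma_link} gives $\chi(\mathcal{L}_{1}) = \chi(S^{M-1}) + (-1)^{M-2}\chi(F_{f})\,\chi(S^{0}) = 2 - 2\chi(F_{f})$, again using that $M$ is odd so that $\chi(S^{M-1}) = 2$ and $(-1)^{M-2} = -1$. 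Thus $\chi(\partial F_{1}) = \chi(\mathcal{L}_{1})$ would force $\chi(F_{f}) = 1$; since this is excluded, $\partial F_{1}$ and $\mathcal{L}_{1}$ have distinct Euler characteristics and hence cannot be homotopically equivalent. Combining with the previous paragraph, the conclusion holds for all stages $I$ with $1 \leq I \leq K$.

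I do not anticipate a genuine obstacle: the argument is bookkeeping that combines homotopy invariance of $\chi$ with the Euler characteristic formulae already established. The only points requiring care are that Theorem \ref{carac2} is stated for $I \geq 2$, so the stage $I = 1$ must be pinned down by the direct computation above, and that the parity bookkeeping (which sphere has vanishing Euler characteristic, and the sign $(-1)^{M-2}$) be checked so that the two computed values $0$ and $2 - 2\chi(F_{f})$ agree exactly when $\chi(F_{f}) = 1$.
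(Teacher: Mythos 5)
Your argument is correct and follows essentially the same route as the paper: the corollary is the contrapositive of Theorem \ref{carac2} combined with the homotopy invariance of the Euler characteristic. You are in fact slightly more careful than the paper's own proof, which cites Theorem \ref{carac2} for all stages even though that theorem is stated only for $I\geq 2$; your direct computation for $I=1$ (giving $\chi(\partial F_{1})=0$ and $\chi(\mathcal{L}_{1})=2-2\chi(F_{f})$, which coincide exactly when $\chi(F_{f})=1$) correctly closes that small gap and matches the pattern of Lemma \ref{carac} for odd $I$.
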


\begin{proof} It is now trivial because $\chi(F_{I})=\chi(F_{f})\neq 1,$ on each stage $I.$ Therefore, by Theorem \ref{carac2} the respectives Milnor boundary $\partial F_{I}$ and the link $\mathcal{L}_{I}$ can not be homotopically equivalent.  \end{proof}

The next example shows that for odd-dimension $M$ it is easy to construct a family of map germ where the Euler characteristic of the Milnor fiber is not equal to one. 

\begin{example}
Let $f:(\bR^{M},0)\to (\bR^{K},0)$ be an analytic map germ $M>K\geq 2,$ with $\Sing f =\{0\},$ and $g:(\bR^{K},0)\to (\bR^{K},0)$ be an analytic ramified covering map branched along $\{0\}$ with t-sheets, $t\geq 2.$ Then, for all fixed $z \in (\bR^{K},0),$ $0<\|z\|\ll 1,$ and all $x\in g^{-1}(z)$ the map $g$ is a local diffeomorphism and the fiber $g^{-1}(z)$ consists of a finite number of points, and we set $t:=\# g^{-1}(z).$ Thus, $\Sing g =\{0\}$ and the composition map germ $h=g\circ f: (\bR^{M},0)\to (\bR^{K},0)$ safisfies that $\Sing h =f^{-1}(0)\subseteq V_{h}.$ Since $f$ is tame, it is not hard to see that $h$ is tame as well. Then the map $h$ admits a  Milnor tube fibration with Milnor fiber $F_{h}=\sqcup_{i=1}^{t}F_{f}$ (t-disjoint copies of $F_{f}$). Therefore we have that $\chi(F_{h})=t. \chi(F_{f})=t\geq 2,$ where we use that $\chi(F_{f})=1$ by Corollary \ref{isola}. 
\end{example}


\begin{thebibliography}{1}

\bibitem{AMS} M. Aguilar, A. Menegon, J. Seade. {\em On the boundary of the Milnor fiber}. In ``Singularities and Their Interaction with Geometry and Low Dimensional Topology", in honor of A. N\'emethi.  J. Fern\'andez de Bobadilla, T. Laszlo and A. Stipsicz (Eds.), Birkhauser. ISBN 978-3-030-61957-2.

\bibitem{ACT} R. N. Ara\'ujo dos Santos, Y. Chen, M. Tib\u ar, \emph{Singular open book structures from real mappings}. Cent. Eur. J. Math. 11 (2013) no. 5, 817-828. 


\bibitem{ADD} R. N. Ara\'ujo dos Santos, D. Dreibelbis, N. Dutertre, {Topology of the real Milnor fiber for isolated singularities}. In Real and Complex Singularities, Contemporary Mathematics Vol. 569 (Amer. math. Soc., Providence, RI, 2012).

\bibitem{AHSS} R. N. Ara\'ujo dos Santos, M. Hohlenwerger, O. Saeki, T. Souza. {\it New examples of Neuwirth-Stallings pairs and non-trivial real Milnor fibrations.} Annales de l'Institut Fourier, Volume 66 (2016) no. 1, pp. 83--104. doi : 10.5802/aif.3006. \url{https://aif.centre-mersenne.org/articles/10.5802/aif.3006/}

\bibitem{AR} R. N. Ara\'ujo dos Santos, M. F.  Ribeiro, {\it Geometrical Conditions for the Existence of a Milnor Vector Field}. Bulletin Brazilian Mathematical Society (online), v. 52, p. 771-789, 2020.

\bibitem{ART1} R. N. Ara\'ujo dos Santos, M.F. Ribeiro, M. Tib\u ar,  {\it Fibrations of highly singular map germs}.  Bull. Sci. Math. 155 (2019), 92-111.  Online  \url{https://doi.org/10.1016/j.bulsci.2019.05.001}
	
\bibitem{C} C. Caubel. {\it Contact structures and non-isolated singularities.} In Singularity theory. Dedicated to Jean-Paul Brasselet on his 60-th birthday. D. Ch\' eniot et al. eds., 475-485. World Scientific, Hackensach, N.J., 2007.  \url{https://doi.org/10.1142/9789812707499_0018}


\bibitem{CT} Y. Chen,  {\it Milnor fibration at infinity for mixed polynomials}. Cent. Eur. J. Math. {12}, (2014)  28-38.

\bibitem{CL} P.T. Church and K. Lamotke, {\it Non-trivial polynomial isolated singularities}, Nederl. Akad. Wetensch. Proc. Ser. A 78 = Indag. Math. 37 (1975), 149-154.




\bibitem{MC2} J-L. Cisneros-Molina, A. Menegon, {\it Equivalence of Milnor and Milnor-L\^e fibrations for real analytic maps}. Int. J. Math., vol. 30, n. 14, 1950078 (2019).

\bibitem{CSG} J-L. Cisneros-Molina, J. Seade, and N. G. Grulha Jr. {\it On the topology of real analytic maps}, International Journal of Mathematics, Vol. 25, No. 07, 1450069- (2014). \url{https://doi.org/10.1142/S0129167X14500694}

\bibitem{CSS1} J-L. Cisneros-Molina, J. Seade, J. Snoussi, {\it  Refinements of Milnor's fibration theorem for complex singularities}. Advances in Mathematics, 222, 2009, 937--970. \url{https://doi.org/10.1016/j.aim.2009.05.010}

\bibitem{Durfee} A. H. Durfee, Neighborhoods of algebraic sets, Trans. Amer. Math. Soc. 276 (1983), no. 2, 517--530.

\bibitem{DR} N. Dutertre, R. N. Ara\' ujo dos Santos, {\it Topology of Real Milnor Fibrations for Non-Isolated Singularities}, International Mathematics Research Notices, Volume 2016, Issue 16, 2016, Pages 4849--4866, \url{https://doi.org/10.1093/imrn/rnv286}

\bibitem{DACA} N. Dutertre, R. N. Ara\' ujo dos Santos, Y. Chen, A. A. E. Santo. {\it Open book structures on semi-algebraic manifolds.} Manuscripta Math. 149, 205--222 (2016). \url{https://doi.org/10.1007/s00229-015-0772-4}

\bibitem{FM} J. Fern\'andez de Bobadilla and A. Menegon. {\it The boundary of the Milnor fiber of complex and real analytic non-isolated singularities}, Geom. Dedicata 173 (2014) 143-162.

\bibitem{Lo} E. Looijenga. {\it Isolated Singular Points on Complete Intersections}. London Mathematical Society Lecture Note Series (1984). Cambridge University Press. \url{https://doi.org/10.1017/CBO9780511662720}

\bibitem{Lo2} E. Looijenga. {\it A note on polynomial isolated singularities}. Indagationes Mathematicae (Proceedings)
Volume 74, 1971, Pages 418-421. \url{https://doi.org/10.1016/S1385-7258(71)80049-5
}
\bibitem{Le} D. T. L\^e,  {\it Some remarks on relative monodromy}. Real and complex singularities (Proc. Ninth Nordic Summer School/NAVF Sympos. Math., Oslo, 1976), pp. 397-403. Sijthoff and Noordhoff, Alphen aan den Rijn, 1977.  

\bibitem{Sa} S. L\'opez de Medrano. {\it Singularities of homogeneous quadratic mappings.} Rev. R. Acad. Cienc. Exactas F\'isica. Nat. Ser. A Math. RACSAM, 108(1): 95--112, 2014.

\bibitem{Sa1} S. L\'opez de Medrano, {\it Topology and Geometry of Intersections of Ellipsoids in $\mathbb{R}^n$}, Grundlehren der mathematischen Wissenschaften, Springer Cham, 2023. \url{https://doi.org/10.1007/978-3-031-28364-2} 

\bibitem{Ma} D. B. Massey, {\it Real analytic Milnor fibrations and a strong \L ojasiewicz inequality.} Real and Complex Singularities, 268-292, London Math. Soc. Lecture Note Ser., 380, Cambridge Univ. Press, Cambridge, 2010. 



\bibitem{Me3} A. Menegon, {\it Thom property and Milnor-L\^e fibration for analytic maps}. Math. Nachrichten, 2023.

\bibitem{ACJS} A. Menegon, J-L., Cisneros et al., {\it Fibration Theorems a la Milnor for analytic maps with non-isolated singularities}, Preprint.


\bibitem{MMS} A. Menegon and C. S. Marques-Silva {\it On the topology of the Milnor-L\^e fibration for functions of three real variables}. Bull. of the Brazilian Math. Soc. 53 (2022), 461--477.

\bibitem{MM} A. Menegon and R. S. Martins {\it Milnor-L\^e type fibrations for subanalytic maps}. Bol. Soc. Mat. Mexicana, (2022)28:30.

\bibitem{MS2} A. Menegon and J. Seade, {\it On the L\^e-Milnor fibration for real analytic maps}. Mathematische Nachrichten, vol. 290, p. 382-392 (2017).

\bibitem{MS} A. Menegon and J. Seade, {\it Vanishing zones and the topology of non-isolated singularities}. Geo. Dedicata, vol. 202, p. 321-335 (2019).

\bibitem{MP} F. Michel, A. Pichon. {\it On the boundary of the Milnor fiber of non-isolated singularities}, Int. Math. Res. Not. 43 (2003), 2305-2311. Erratum (2004), 309-310

\bibitem{MP3} F. Michel, A. Pichon. {\it Carrousel in family and non-isolated hypersurface singularities in $\C^3$},  J. Reine Angew. Math.  (Crelle's Journal) 720 (2016), p. 1-32.

\bibitem{MPW} F. Michel, A. Pichon, C. Weber. {\it The boundary of the Milnor fiber for some non-isolated singularities of complex surfaces}, Osaka J. Math. 46 (2009), 291-316.

\bibitem{Mi} J. Milnor, {\it Singular points of complex hypersurfaces}. Ann. of Math. Studies, Vol. 61, Princeton University Press, 1968.

\bibitem{NS} A. N\'emethi, \'A. Szil\'ard. {\em Milnor fiber boundary of a non-isolated surface singularity}, Lecture Notes in Mathematics 2037. Springer Verlag 2012.

\bibitem{Oka} M. Oka, {\it Topology of polar weighted homogeneous hypersurface}. Kodai Math. J. 31 (2008), no. 2, 163--182. 

\bibitem{PPP} P. Popescu-Pampu.  {\it  Complex singularities and contact topology}. Winter Braids Lecture Notes Vol. 3 (2016), 1-74
Course no III. (\url{https://proceedings.centre-mersenne.org/item/10.5802/wbln.14.pdf})

\bibitem{Ri} M. F. Ribeiro, {\it New classes of mixed functions without Thom regularity}. Bulletin of the Brazilian Mathametical Society, New Series (154) v. 50, p. 1-16, (2019). Online \url{ https://doi.org/10.1007/s00574-019-00154-z}

 \bibitem{RRS} M. F. Ribeiro, F. Reis, A. Santo, {\it Milnor-Hamm Fibration for Mixed Maps}. Bulletin Brazilian Mathematical Society (online), v. 52, p. 739-766, 2020.

\bibitem{ADRG} M. F. Ribeiro, R. N. Ara\' ujo dos Santos, D. Dreibelbis, M. Griffin, {\it Harmonic morphisms and their Milnor fibrations}. Annali di Matematica (2023). \url{https://doi.org/10.1007/s10231-023-01311-4}

\bibitem{R} A. Ranicki. {\it High-dimensional Knot Theory - Algebraic Surgery in Codimension 2}. Serier title: Springer Monographs in Mathematics. Springer Verlag Monographs in Mathematics,1998. \url{https://doi.org/10.1007/978-3-662-12011-8}
 
\bibitem{Siersma1} D. Siersma, {\it Variation mappings on singularities with a 1-dimensional critical locus}. Topology 30, 445-469 (1991).

\bibitem{Siersma2} D. Siersma, {\it The vanishing topology of non isolated singularities}. In "New developments in singularity theory, Cambridge 2000", p. 447-472, Kluwer, Dordrecht (2001).

\bibitem{W} H. E. Winkelnkemper. {\it Manifolds as open books.} Bull. A.M.S., 79:45--51, 1973.

\end{thebibliography}
\end{document}